\def\gb{{\bf g}}
\def\hb{{\bf h}}
\def\nb{{\bf n}}
\def\rb{{\bf r}}
\def\ub{{\bf u}}
\def\vb{{\bf v}}
\def\xb{{\bf x}}
\def\Ab{{\bf A}}
\def\Bb{{\bf B}}
\def\Cb{{\bf C}}
\def\Db{{\bf D}}
\def\Hb{{\bf H}}
\def\Ib{{\bf I}}
\def\Jb{{\bf J}}
\def\Pb{{\bf P}}
\def\Rb{{\bf R}}
\def\Sb{{\bf S}}
\def\Tb{{\bf T}}
\def\epsilonb{{\bm \epsilon}}
\def\etab{{\bm \eta}}
\def\gammab{{\bm \gamma}}
\def\omegab{{\bm \omega}}
\def\phib{{\bm \phi}}
\def\thetab{{\bm \theta}}
\newcommand{\0}{{\bf 0}}
\newcommand{\1}{{\bf 1}}
\def\nt{{\tilde{n}}}
\def\st{{\tilde{s}}}
\def\hbt{{\widetilde{\bf h}}}
\def\xbt{{\widetilde{\bf x}}}
\def\Abt{{\widetilde{\bf A}}}
\def\Bbt{{\widetilde{\bf B}}}
\def\Cbt{{\widetilde{\bf C}}}
\def\Dbt{{\widetilde{\bf D}}}
\def\Hbt{{\widetilde{\bf H}}}
\def\Rbt{{\widetilde{\bf R}}}
\def\Sbt{{\widetilde{\bf S}}}
\def\Tbt{{\widetilde{\bf T}}}
\def\etabt{{\widetilde{\bm \eta}}}
\def\thetabt{{\widetilde{\bm \theta}}}
\def\omegao{{\overline{\omega}}}
\def\Ic{{\mathcal{I}}}
\def\red{\text{Red}}
\DeclareMathOperator{\adj}{adj}
\DeclareMathOperator{\sym}{sym}
\def\tb{{\bm \theta}}
\def\be{{\bm \epsilon}}
\def\bg{{\bm \gamma}}
\def\bn{{\bm \eta}}
\newcommand{\bp}{{\bm \phi}}
\newcommand{\bt}{{\bm \theta}}
\newcommand{\bw}{{\bm \omega}}
\newcommand{\g}{{\bf g}}
\def\h{{\bf h}}
\def\u{{\bf u}}
\def\v{{\bf v}}
\newcommand{\x}{{\bf x}}
\newcommand{\y}{{\bf y}}
\def\A{{\bf A}}
\def\B{{\bf B}}
\def\C{{\bf C}}
\def\D{{\bf D}}
\newcommand{\I}{{\bf I}}
\newcommand{\J}{{\bf J}}
\newcommand{\M}{{\bf M}}
\def\P{{\bf P}}
\def\S{{\bf S}}
\def\R{\mathbb{R}}
\newtheorem{theorem}{Theorem}
\newtheorem{lemma}[theorem]{Lemma}
\theoremstyle{remark}
\newtheorem*{example}{Example}
\numberwithin{equation}{section}
\newtheorem{definition}{Definition}
\begin{document}


\title{Subcritical Bifurcations in the Kuramoto Model with Positive Coupling}
\author{Timothy Ferguson$^1$}

\begin{abstract}
The Kuramoto model is a standard model for the dynamics of coupled oscillator networks. In particular, it is used to study long time behavior such as phase-locking where all oscillators rotate at a common frequency with fixed angle differences. It has been observed that phase-locking occurs if the natural frequencies of the oscillators are sufficiently close to each other, and doesn't if they aren't. Intuitively, the closer the natural frequencies are to each other the more stable the system is. In this paper, we study the Kuramoto model on ring networks with positive coupling between the oscillators and derive a criterion for a bifurcation in which two branches of phase-locked solutions collide as we increase the coupling strengths. Furthermore, we state a criterion for when one of these branches consists of stable phase-locked solutions. In this case stability is lost as the positive coupling is increased. We then apply our criteria to show that for any size of the ring network there always exists choices of the natural frequencies and coupling strengths so that this bifurcation occurs. (We require at least five oscillators for one branch to consist of stable phase-locked solutions.) Finally, we conjecture that generically our bifurcation is locally a subcritical bifurcation and globally an $S$-curve. (In the case where one branch consists of phase-locked solutions, an $S$-curve results in bistability i.e. the existence of two distinct stable phase-locked solutions.) Finally, we note that our methods are constructive.
\end{abstract}

\maketitle

\smallskip
\noindent \footnotesize\textbf{Keywords.} Kuramoto model, fixed points, bifurcations, stability
\\
\\
\smallskip
\noindent \footnotesize\textbf{AMS subject classifications.} 34C15, 34C23, 34D20

\footnotetext[1]{Department of Mathematics, Arizona State University, 901 S. Palm Walk, Tempe, AZ 85281, USA}


\section{Introduction}

The Kuramoto model is a model for the dynamics of oscillator networks, or more generally, networks of involving periodic components. In particular, it is used to study synchronization whereby the individual oscillators act in unison and rotate at a common frequency rather than rotate at their initial individual frequencies. The Kuramoto model was first defined by Y. Kuramoto in 1984 in \cite{MR762432} and has many practical applications to periodic phenomena. Examples from physics and engineering include generators in the power grid \cite{Filatrella2008} and laser arrays \cite{PhysRevA.52.4089} while examples from biology include pacemaker cells in the heart \cite{peskin75} and flashing colonies of fireflies \cite{Buck1988SynchronousRhythmicFlashing}. For a more complete discussion of synchronization as well as a history of the development of the Kuramoto model see \cite{Sync.book,MR1783382,Acebron.etal.05}. The Kuramoto model on a ring network with $n$ oscillators is the system of $n$ coupled non-linear differential equations
\begin{align} \label{eq:model}
\frac{d\theta_i}{dt} = \omega_i + \sigma (\gamma_{i+1,i} \sin(\theta_{i+1} - \theta_i) + \gamma_{i-1,i} \sin(\theta_{i-1} - \theta_i)) \quad \text{for} \quad i \in \{1,\dots,n\}
\end{align}
where $\theta_i$ denotes the phase angle of the $i$th oscillator, $\omega_i$ the natural frequency of the $i$th oscillator, and $\gamma_{i-1,i} = \gamma_{i,i-1} > 0$ represents strength of the coupling between $\theta_{i-1}$ and $\theta_i$. (Throughout we will always interpret indices modulo $n$.) Note that the right hand side of \eqref{eq:model} naturally breaks up into two contributions. The first contribution is the natural frequency $\omega_i$ which determines the behavior of $\theta_i$ without any coupling. The second contribution is the term $\gamma_{i+1,i} \sin(\theta_{i+1} - \theta_i) + \gamma_{i-1,i} \sin(\theta_{i-1} - \theta_i)$ which determines the behavior of $\theta_i$ due to its interaction with $\theta_{i-1}$ and $\theta_{i+1}$. The parameter $\sigma > 0$ allows us to change the relative strengths of these two contributions. It is well-known that if $\sigma$ is sufficiently large, then the Kuramoto model \eqref{eq:model} has a unique stable fixed point to which all solutions converge. In this case all oscillators rotate at a common rate and we say that the system synchronizes. For results concerning the critical value of $\sigma$ for the onset of synchronization as well as estimates for specific network structures see \cite{Verwoerd.Mason.2008,Dorfler.Chertkov.Bullo.2013,Dorfler.Bullo.2011,Chopra.Spong.2009,Verwoerd.Mason.2009}. For smaller values of $\sigma$ clusters of the network may synchronize to different frequencies. See \cite{Brady.2018} for diagrams illustrating this phenomenon. In particular, these diagrams illustrate bifurcations that occur when two or more synchronized clusters become a single larger synchronized cluster.

It is natural to ask what kind of bifurcations can occur in the Kuramoto model as we change the parameter $\sigma$. In particular, we will focus on bifurcations involving phase-locked solutions i.e. solutions for which all oscillators rotate at a common frequency with fixed angle differences. In Figure \ref{fig:1a} we give an example plot of all phase-locked solutions as we vary $\sigma$. In order to visualize these plots we represent a phase-locked solution by its order parameter $r = \frac{1}{n} \bigg\rvert \sum_{i=1}^n e^{\theta_i \sqrt{-1}}\bigg\rvert$ which is a measure of the solutions level of synchronization. One might expect that increasing $\sigma$ will only create rather than destroy stability of phase-locked solutions. However, this is not always the case. For example, Terada, Ito, Aoyagi, and Yamaguchi \cite{Terada_2017} numerically discovered a subcritical saddle-node bifurcation whereby a stable phase-locked solution collides with a 1-saddle. They did this by choosing the natural frequencies from a bimodal distribution. In fact, their saddle-node bifurcation is part of a larger object called an $S$-curve which is discussed in \cite{golubitsky1985singularities} and illustrated in Figure \ref{fig:1b}. (Note that for values of $\sigma$ slightly less than one the $S$-curve in Figure \ref{fig:1b} results in bistability, namely, the existence of two distinct stable phase-locked solutions.)

\begin{figure}[H]
\captionsetup{font=scriptsize}
\centering
\begin{subfigure}{7cm}
  \centering
  \includegraphics[width=7cm]{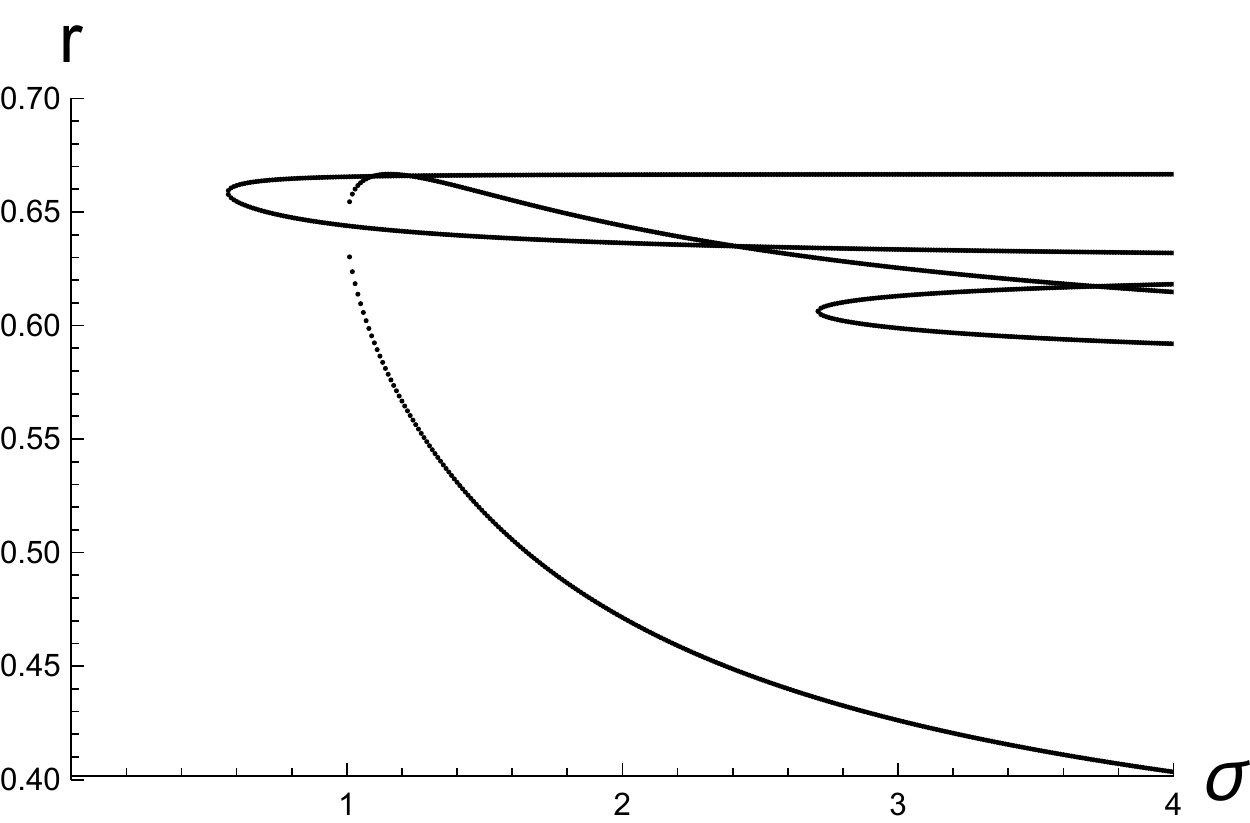}
  \caption{} \label{fig:1a}
\end{subfigure}
\begin{subfigure}{7cm}
  \centering
  \includegraphics[width=7cm]{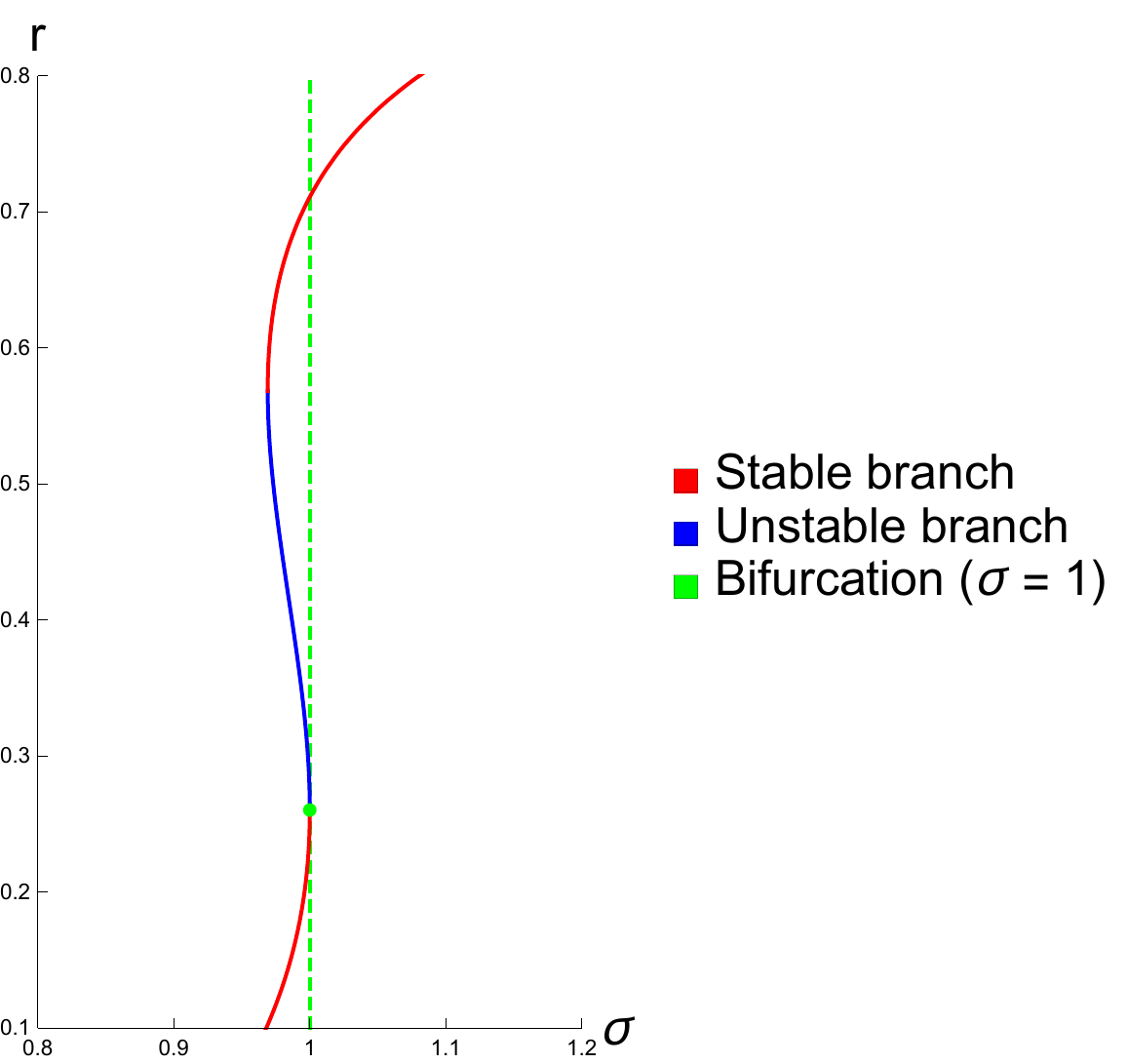}
  \caption{} \label{fig:1b}
\end{subfigure}
\caption{(A) A plot of all phase-locked solutions of the Kuramoto model \eqref{eq:model} with $(\omega_1,\omega_2,\omega_3) = (1,0,-1)$ and $(\gamma_1,\gamma_2,\gamma_3) = (1,1,1)$ for a range of values of $\sigma$. A phase-locked solution $(\theta_1,\theta_2,\theta_3)$ is represented by its order parameter $r$. (B) A plot of phase-locked solutions of the Kuramoto model \eqref{eq:model} with $(\omega_1,\omega_2,\omega_3,\omega_4,\omega_5) = (-0.69,0.69,0,0,0)$ and $(\gamma_1,\gamma_2,\gamma_3,\gamma_4,\gamma_5) = (0.20,1,1,1,1)$ for a range of values of $\sigma$. The plot contains a local subcritical saddle-node bifurcation at $\sigma = 1$ which is a part of a global $S$-curve. For values of $\sigma$ slightly less than one there are two distinct stable phase-locked solutions.} \label{fig:1}
\end{figure}

In Section \ref{sec:notation} we give a criterion for the Kuramoto model \eqref{eq:model} to have a bifurcation in which two branches of phase-locked solutions collide as $\sigma$ increases. We conjecture that this bifurcation is generically a subcritical saddle-node bifurcation. In addition, we give a sufficient condition for one of these branches to consist of stable phase-locked solutions. We end the section by applying this criterion to prove that for every $n \ge 3$ there exists values of $\omegab$ and $\gammab$ for which the Kuramoto model \eqref{eq:model} exhibits this bifurcation. We also show for $n \ge 5$ that this can be done so that one branch consists of stable phase-locked solutions. We end this section by presenting our methods as a constructive algorithm.

In Section \ref{sec:future} we discuss additional questions and future work. We defer proofs to the appendix.



\section{Notation and Main Results} \label{sec:notation}

In this section we introduce all relevant notations and definitions as well as state our main results. In general we defer the proofs of our results to the appendix.

Because we are only considering ring networks we let $e_i = (i,i+1)$ denote the $i$th edge in the network, $\eta_i = \theta_{i+1} - \theta_i$ the angle difference along the $i$th edge, and $\gamma_i = \gamma_{i+1,i}$ the coupling strength along the $i$th edge. For simplicity we further define the vector quantities $\omegab = (\omega_1, \dots, \omega_n)$, $\gammab = (\gamma_1, \dots, \gamma_n)$, $\thetab = (\theta_1, \dots, \theta_n)$, and $\etab = (\eta_1, \dots, \eta_n)$. With this notation we can rewrite \eqref{eq:model} as
\begin{align} \label{eq:model2}
\frac{d\theta_i}{dt} = \omega_i + \sigma (\gamma_i \sin \eta_i - \gamma_{i-1} \sin \eta_{i-1}) \quad \text{for} \quad i \in \{1,\dots,n\}.
\end{align}
This leads to the following definitions.
\begin{definition} \label{def:gJp}
Define the vector-valued function $\gb(\cdot,\gammab) : \R^n \rightarrow \R^n$ by
\begin{align*}
\gb(\thetab,\gammab)_i := -\gamma_i \sin \eta_i + \gamma_{i-1} \sin \eta_{i-1}
\end{align*}
and define the matrix-valued Jacobian $\Jb(\cdot,\gammab) : \R^n \rightarrow \R^{n \times n}$ by
\begin{align*}
\Jb(\thetab,\gammab)_{ij} := \frac{\partial}{\partial \theta_j} \gb(\thetab,\gammab)_i =
\begin{cases}
-\gamma_i \cos \eta_i & \mbox{if $j = i+1$,}
\\
\gamma_i \cos \eta_i + \gamma_{i-1} \cos \eta_{i-1}  & \mbox{if $j = i$,}
\\
-\gamma_{i-1} \cos \eta_{i-1} & \mbox{if $j = i-1$,}
\\
0 & \mbox{otherwise.}
\end{cases}
\end{align*}
Furthermore, let $\Pb$ be an $(n-1) \times n$ matrix with rows which form an orthonormal basis for the orthogonal complement $\1^\perp$ where $\1 = (1,\dots,1)^\top$.
\end{definition}

With this notation we can rewrite \eqref{eq:model} as the vector equation
\begin{align}
\frac{d\thetab}{dt} = \omegab - \sigma \gb(\thetab,\gammab).
\end{align}
Note that we chose the sign of $\gb(\thetab,\gammab)$ to simplify computations later on. Now a phase-locked solution has the form $\thetab(t) = \omegao \1 t + \thetab_0$ where $\omegao = \frac{1}{n} \sum_{i=1}^n \omega_i$, the mean of $\omegab$, is the common frequency of rotation and $\thetab_0$ represents the fixed angle differences. By rotating the reference frame we can assume that $\omegao = 0$ without loss of generality. Therefore a phase-locked solution is just a fixed point of the vector equation $\sigma \gb(\thetab,\gammab) = \omegab$. We will call a phase-locked solution stable if it is stable under all mean-zero perturbations, namely, perturbation orthogonal to $\1$. This is because $\gb(\cdot,\gammab)$ is inherently invariant under translations by the vector $\1$. From our choice of sign, we conclude that a phase-locked solution $\thetab$ is stable if the eigenvalues $\lambda_1,\dots,\lambda_n$ of the Jacobian matrix $\Jb(\thetab,\gammab)$ are all positive with the exception of $\lambda_1 = 0$ corresponding to the eigenvector $\1$. Finally, we note that $\Jb(\thetab,\gammab)$ is a Laplacian matrix, namely, a symmetric matrix with mean zero rows. Laplacian matrices form an important class of matrices and have an important connection to networks. One of the most famous connections between Laplacian matrices and networks is the Matrix Tree Theorem \cite{MR3097651}. We now state Theorem \ref{thm:detred} which is the special case of the famous Matrix Tree Theorem for ring networks.

\begin{theorem}[Matrix Tree Theorem] \label{thm:detred}
The reduced determinant satisfies the identities
\begin{align*}
\det_\red(\Jb(\thetab,\gammab)) := \det(\Pb \Jb(\thetab,\gammab) \Pb^\top) = n \sum_{i=1}^n \prod_{j \ne i} \gamma_j \cos \eta_j = n \left( \prod_{j=1}^n \gamma_j \cos \eta_j \right) \sum_{i=1}^n \frac{1}{\gamma_i \cos \eta_i} = \prod_{i=2}^n \lambda_i
\end{align*}
where $\lambda_1, \dots, \lambda_n$ are the eigenvalues of $\Jb(\thetab,\gammab)$ and $\lambda_1 = 0$ corresponds to the eigenvector $\1$.
\end{theorem}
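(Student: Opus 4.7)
The plan is to recognize $\Jb(\thetab, \gammab)$ as the weighted graph Laplacian of the $n$-cycle with edge weights $w_i := \gamma_i \cos \eta_i$, and to combine two classical facts: (i) $\det(\Pb \Jb \Pb^\top)$ equals the product of the nonzero eigenvalues of $\Jb$, and (ii) the weighted Matrix Tree Theorem applied to the cycle, whose spanning trees are in bijection with the $n$ single-edge deletions. The middle equality in the statement is then purely algebraic, obtained by factoring $\prod_j \gamma_j \cos \eta_j$ out of the sum.

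First I would verify the last equality $\det(\Pb \Jb \Pb^\top) = \prod_{i=2}^n \lambda_i$. Extend the rows of $\Pb$ to an orthonormal basis of $\R^n$ by adjoining $\1^\top/\sqrt{n}$ to obtain an $n \times n$ orthogonal matrix $\mathbf{Q}$; since $\Jb \1 = \0$ and $\Jb$ is symmetric, conjugation by $\mathbf{Q}$ renders $\Jb$ block diagonal with a $1 \times 1$ zero block and the $(n-1) \times (n-1)$ block $\Pb \Jb \Pb^\top$. Hence the spectrum of the latter is $\{\lambda_2, \dots, \lambda_n\}$ and its determinant is the claimed product.

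Next I would connect $\prod_{i=2}^n \lambda_i$ to principal cofactors of $\Jb$. The coefficient of $x^1$ in $\det(x\I - \Jb) = x \prod_{i=2}^n (x - \lambda_i)$ equals $(-1)^{n-1} \prod_{i=2}^n \lambda_i$, while on the matrix side it equals (with the same sign) the sum of all $n$ principal $(n-1) \times (n-1)$ minors of $\Jb$. Each such minor is a reduced Laplacian determinant, and the classical (weighted) Matrix Tree Theorem identifies it with the weighted spanning-tree sum $\sum_T \prod_{e \in T} w_e$. Since every spanning tree of the $n$-cycle is obtained by deleting exactly one edge, this sum equals $\sum_{i=1}^n \prod_{j \ne i} w_j$, and summing the common value over the $n$ choices of deleted row and column produces the factor of $n$ in the first equality of the statement.

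The main bookkeeping obstacle is tracking this factor of $n$ correctly: one must carefully distinguish $\det(\Pb \Jb \Pb^\top)$, which corresponds to projecting onto the orthogonal complement of $\1$, from a single principal cofactor of $\Jb$ obtained by deleting one row and column, as these two quantities differ by exactly the factor $n$. Once the coefficient-of-$x$ comparison above makes this transparent, all three equalities in the statement fall out immediately from the identification of $\Jb$ as the weighted Laplacian of the cycle.
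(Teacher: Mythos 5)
Your argument is correct: identifying $\Jb(\thetab,\gammab)$ as the weighted cycle Laplacian, reading off $\prod_{i=2}^n\lambda_i$ as $\det(\Pb\Jb\Pb^\top)$ via the orthogonal block decomposition, and then equating the coefficient of $x$ in the characteristic polynomial with the sum of the $n$ principal $(n-1)\times(n-1)$ minors (each a weighted spanning-tree sum $\sum_i\prod_{j\ne i}\gamma_j\cos\eta_j$) correctly produces the factor of $n$ and all the claimed equalities. The paper offers no proof of this statement, deferring entirely to the classical weighted Matrix Tree Theorem by citation, so your write-up is essentially the intended argument with the omitted bookkeeping supplied.
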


We require one more definition before stating our first theorem.

\begin{definition} \label{def:vn}
Define the vector fields $\vb(\cdot,\gammab), \nb(\cdot,\gammab) : \R^n \rightarrow \R^n$ by
\begin{align}
\vb(\thetab,\gammab) := -\Pb^\top \adj(\Pb \Jb(\thetab,\gammab) \Pb^\top) \Pb \gb(\thetab,\gammab) \quad \text{and} \quad \nb(\thetab,\gammab) := \nabla_\thetab \det_\red(\Jb(\thetab,\gammab))
\end{align}
where $\adj$ is the adjugate matrix. Furthermore, define their inner product
\begin{align}
\delta(\thetab,\gammab) := \nb(\thetab,\gammab)^\top \vb(\thetab,\gammab).
\end{align}
\end{definition}

Now we can state our first main result Theorem \ref{thm:criterion1} which is a criterion for the Kuramoto model \eqref{eq:model} to have a bifurcation in which two branches of phase-locked solutions collide as we increase $\sigma$. We defer the proof of Theorem \ref{thm:criterion1} to the appendix.

\begin{theorem} \label{thm:criterion1}
Fix $\tb_0$ and $\gammab$ satisfying
\begin{align} \label{eq:hypothesis}
\det_\red(\Jb(\thetab_0,\gammab)) = 0 \quad \text{and} \quad \delta(\thetab_0,\gammab) < 0.
\end{align}
Furthermore, fix $\sigma_0$ and set $\omegab = \sigma_0 \gb(\thetab_0,\gammab)$. Then the Kuramoto model \eqref{eq:model} with parameters $\omegab$ and $\gammab$ has a bifurcation at $(\tb,\sigma) = (\tb_0,\sigma_0)$ which involves two branches of phase-locked solutions colliding as $\sigma$ increases to $\sigma_0$. Locally one branch satisfies $\det_\red(\Jb(\thetab,\gammab)) > 0$ and the other $\det_\red(\Jb(\thetab,\gammab)) <0$. In addition, if we suppose that the component-wise vector $\cos \etab_0$ has no zero components, then locally on each branch
\begin{align}
\text{$\#$positive eigenvalues of $\Jb(\thetab,\gammab)$} = n_+(\cos \etab_0) -
\begin{cases}
1 & \mbox{if $(-1)^{n-n_+(\cos \etab_0)} \det_\red(\Jb(\thetab,\gammab)) > 0$,}
\\
0 & \mbox{if $(-1)^{n-n_+(\cos \etab_0)} \det_\red(\Jb(\thetab,\gammab)) < 0$,}
\end{cases}
\end{align}
where $n_+(\cos \etab_0)$ is the number of positive components of $\cos \etab_0$. In particular, if $n_+(\cos \etab_0) = n-1$, then a branch of stable phase-locked solutions collide with a branch of 1-saddles.
\end{theorem}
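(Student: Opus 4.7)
The plan is to view phase-locked solutions with common frequency zero as zeros of $F(\thetab,\sigma) := \sigma \gb(\thetab,\gammab) - \omegab$, for which $(\thetab_0,\sigma_0)$ is a solution by construction. The partial Jacobian $F_{\thetab}(\thetab_0,\sigma_0) = \sigma_0 \Jb(\thetab_0,\gammab)$ always has the gauge direction $\1$ in its kernel, and the hypothesis $\det_\red \Jb(\thetab_0,\gammab) = 0$ adds an extra kernel direction $u \in \1^\perp$, so $M_0 := \Pb \Jb(\thetab_0,\gammab)\Pb^\top$ has $\ker M_0 = \mathrm{span}(\tilde u)$ where $\tilde u := \Pb u$. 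The condition $\delta(\thetab_0,\gammab) < 0$ forces $\vb(\thetab_0,\gammab) \ne 0$ and therefore pins $M_0$ to rank exactly $n-2$; consequently $\adj(M_0) = \alpha \tilde u \tilde u^\top$ for some nonzero scalar $\alpha$, and by definition $\vb(\thetab_0,\gammab) = -\alpha\,(u^\top \gb(\thetab_0,\gammab))\, u$, so in particular $u^\top \gb(\thetab_0,\gammab) \ne 0$.

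To extract the saddle-node behaviour, I would parameterize a smooth solution branch $(\thetab(s),\sigma(s))$ through $(\thetab_0,\sigma_0)$ and differentiate $\sigma \gb(\thetab,\gammab) = \omegab$ in $s$. Applying the projector $\Pb^\top \adj(M_0)\Pb$ to the first derivative kills the Jacobian term via $\adj(M_0) M_0 = 0$ and yields $0 = \dot\sigma(0)\,\vb(\thetab_0,\gammab)$, forcing $\dot\sigma(0) = 0$; then $\dot\thetab(0) \in \ker \Jb(\thetab_0,\gammab)$, so modulo gauge one may take $\dot\thetab(0) = c u$ with $c \ne 0$. Applying the same projector to the second derivative, and using Jacobi's formula in the form $\nb(\thetab_0,\gammab)^\top u = \alpha\, u^\top H(u,u)$, where $H$ denotes the Hessian tensor of $\gb$ at $\thetab_0$, reduces everything to the scalar identity
\begin{align*}
\ddot\sigma(0) = \frac{\sigma_0\, c^2\, \delta(\thetab_0,\gammab)}{\alpha^2 \bigl(u^\top \gb(\thetab_0,\gammab)\bigr)^2}.
\end{align*}
Thus $\delta(\thetab_0,\gammab) < 0$ forces $\ddot\sigma(0) < 0$: $\sigma(s)$ has a strict local maximum at $s=0$, so the branches $s>0$ and $s<0$ collide as $\sigma \uparrow \sigma_0$. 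The non-trivial eigenvalue $\lambda_2(s)$ of $\Jb(\thetab(s),\gammab)$ that vanishes at $s=0$ satisfies $\lambda_2'(0) = c\, u^\top H(u,u) \ne 0$ by the same non-degeneracy, so $\lambda_2$ crosses zero transversally and $\det_\red \Jb = \prod_{i\ge 2}\lambda_i$ changes sign across $s=0$, distinguishing the two branches.

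The inertia count would come from a stand-alone lemma for any weighted cycle Laplacian $L(w)$ with nonzero weights $w_1,\ldots,w_n$: the number of positive eigenvalues of $L(w)$ equals $n_+(w) - 1$ if $\sum_i 1/w_i > 0$ and $n_+(w)$ if $\sum_i 1/w_i < 0$, where $n_+(w)$ denotes the number of positive components of $w$. Applied to $w_i = \gamma_i \cos\eta_i$ this is equivalent to the theorem's formulation via the Matrix Tree identity $\det_\red \Jb = n(\prod_i w_i)(\sum_i 1/w_i)$ together with $\mathrm{sign}(\prod_i w_i) = (-1)^{n-n_+(\cos\etab)}$. I would prove the lemma by continuous deformation from the reference point $w = \1$ (where both sides equal $n-1$): both sides are locally constant on $\{\det_\red L \ne 0\}$, and at each crossing of $\{\det_\red L = 0\}$ a single eigenvalue passes through zero while $\sum_i 1/w_i$ flips sign, so both sides jump by the same $\pm 1$; crossings of $\{w_i = 0\}$ leave the eigenvalue count invariant (the degenerate matrix is a path-graph Laplacian with $n-1$ positive eigenvalues), and the two ingredients $n_+(w)$ and $\mathrm{sign}(\sum_i 1/w_i)$ change in a compensating way.

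The main obstacle I expect is the inertia lemma, specifically checking at each crossing of the singular hypersurfaces that the jumps on both sides go in the same direction; one must in particular verify that the direction in which $\lambda_2$ moves through zero as one crosses $\{\det_\red L = 0\}$ matches the direction of sign change of $\sum_i 1/w_i$. The Lyapunov--Schmidt computation of $\ddot\sigma(0)$ in terms of $\delta$ is largely bookkeeping once $\adj(M_0) = \alpha \tilde u \tilde u^\top$ and Jacobi's formula are in hand.
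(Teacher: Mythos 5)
Your outline is essentially sound and the key computations check out (with $\adj(M_0)=\alpha\,\tilde u\tilde u^\top$ one indeed gets $\vb(\thetab_0,\gammab)=-\alpha(u^\top\gb)u$, $\delta=-\alpha^2(u^\top\gb)\,u^\top H(u,u)$, and hence your formula for $\ddot\sigma(0)$), but there is one genuine gap: you \emph{assume} a smooth solution branch $(\thetab(s),\sigma(s))$ through $(\thetab_0,\sigma_0)$ and then differentiate along it. At this point $\Jb(\thetab_0,\gammab)$ is degenerate beyond the gauge direction, so the implicit function theorem does not hand you such a branch; its existence is exactly the content of the ``two branches collide'' claim and must be constructed, not postulated. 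The paper supplies it by an entirely different device (Lemma \ref{lem:flow}): it integrates the explicit analytic vector field $\dot\thetab=\vb(\thetab,\gammab)$ and pairs it with the closed form $\sigma(s)=\sigma_0\exp(\int_0^s\det_\red(\Jb(\thetab(\st),\gammab))\,d\st)$, verifying directly that $\sigma(s)\gb(\thetab(s),\gammab)=\omegab$ for all $s$. This also makes the ``local maximum of $\sigma$'' step immediate with no second derivative: $\sigma'/\sigma=\det_\red(\Jb(\thetab(s),\gammab))$, and $\delta<0$ says the flow crosses the level set $\{\det_\red=0\}$ from the positive to the negative side, giving \eqref{eq:detsigma}. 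To repair your version you would have to actually carry out the Lyapunov--Schmidt reduction you allude to at the end (solve the complement of $\ker M_0$ by the IFT, then solve the scalar bifurcation equation for $\sigma$ as a function of the kernel coordinate using $h_\sigma=u^\top\gb\ne0$); note that all the needed non-degeneracies ($\dim\ker M_0=1$, $u^\top\gb\ne0$, $u^\top H(u,u)\ne0$) do follow from $\delta<0$, and you also need $\sigma_0>0$ for the sign of $\ddot\sigma(0)$.

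Two further comparisons. First, if you do complete the Lyapunov--Schmidt argument you would obtain strictly more than the theorem: local \emph{uniqueness} of the branch, i.e.\ the genuine saddle-node structure, which the paper deliberately does not claim and only conjectures (via isolatedness in the variety $V$ after \eqref{eq:local}); so your route, fully executed, would be a real improvement, and it is worth checking whether there is an obstruction the author saw that you have not. Second, for the inertia count the paper simply cites Theorem 2.9 of \cite{doi:10.1137/15M1034258} and converts the sign of $\sum_i 1/(\gamma_i\cos\eta_i)$ into the sign of $(-1)^{n-n_+(\cos\etab_0)}\det_\red(\Jb)$ via Theorem \ref{thm:detred}; your proposed from-scratch homotopy proof of that inertia formula is plausible but is where your remaining risk sits (matching the direction of the eigenvalue crossing with the sign flip of $\sum_i 1/w_i$ at each wall), and you could avoid it entirely by citing the known result as the paper does.
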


We note that the bifurcation in Theorem \ref{thm:criterion1} is a subcritical saddle-node bifurcation if there are no other branches in the bifurcation. This occurs if for each sufficiently small $\epsilon > 0$ there exists a neighborhood $(\thetab_0,\sigma_0) \in U_\epsilon \subseteq \R^n$ for which
\begin{align} \label{eq:local}
\{ (\thetab,\sigma) \in U_\epsilon : \sigma \gb(\thetab,\gammab) = \omegab \} = \{ (\thetab(s), \sigma(s)) \in \R^n : |s| < \epsilon \}.
\end{align}
Now since $\vb(\thetab_0,\gammab) \ne \0$ we see that \eqref{eq:local} holds if and only if $(\thetab_0,\sigma_0)$ is an isolated zero of the analytic variety $V = \{ (\thetab,\sigma) \in \R^{n+1} : \sigma \gb(\thetab,\gammab) = \omegab \text{ and } \det_\red(\Jb(\thetab,\gammab)) = 0 \}$. Of course we mean that $(\thetab_0,\sigma_0)$ is an isolated zero up to translation of $\thetab_0$ by multiples of $\1$. This is true in all of our examples and we conjecture that it is true in generically. Heuristically, once we remove translation of $\thetab$ by $\1$, $V$ is actually an analytic variety in $n$ variables. Furthermore, $\sigma \gb(\thetab,\gammab) = \omegab$ contributes $n-1$ linearly independent equations since $\1^\top \gb(\thetab,\gammab) = \1^\top \omegab = \0$ for all values of $\thetab$ and $\gammab$. Therefore $V$ represents an analytic variety in $n$ variables and $n$ equations. For an analysis of the Kuramoto model using algebraic geometry see \cite{2015Chaos..25e3103M}.

Now Theorem \ref{thm:criterion1} is a consequence of Lemma \ref{lem:flow}.

\begin{lemma} \label{lem:flow}
Fix $\thetab_0$, $\gammab$, and $\sigma_0$ and set $\omegab = \sigma_0 \gb(\thetab_0,\gammab)$. Then there exists a unique analytic vector-valued function $\thetab(\cdot) = \thetab(\cdot, \thetab_0,\gammab) : \R \rightarrow \R^n$ satisfying
\begin{align} \label{eq:flow}
\frac{d \thetab}{ds} = \vb(\tb(s),\gammab)
\end{align}
with $\thetab(0) = \thetab_0$. Now define the analytic scalar-valued function $\sigma(\cdot) = \sigma(\cdot,\thetab_0,\gammab, \sigma_0) : \R \rightarrow \R_{>0}$ by
\begin{align} \label{eq:sigma}
\sigma(s) = \sigma_0 \exp \left(\int_0^s \det_\red(\Jb(\thetab(\st),\gammab)) d\st \right).
\end{align}
Then we have the following properties:
\begin{enumerate}
\item $\thetab(s)$ is a phase-locked solution of the Kuramoto model \eqref{eq:model} with fixed parameters $\omegab$ and $\gammab$ and coupling $\sigma(s)$ i.e.
\begin{align} \label{eq:fixed}
\sigma(s) \gb(\thetab(s),\gammab) = \omegab
\end{align}
for all $s \in \R$.
\item $\frac{d \sigma}{ds}$ has the same sign as $\det_\red(\Jb(\thetab(s),\gammab))$.
\end{enumerate}
\end{lemma}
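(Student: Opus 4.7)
The plan is to first establish the existence and uniqueness of the analytic flow $\thetab(\cdot)$, then derive a key algebraic identity relating $\Jb \vb$ to $\gb$, and use that identity to verify both claimed properties by direct differentiation.

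For existence and uniqueness of $\thetab(s)$, I would invoke standard ODE theory applied to the analytic vector field $\vb(\cdot,\gammab)$, whose components are polynomials in the bounded quantities $\sin \eta_j$ and $\cos \eta_j$. Global existence on all of $\R$ follows because $\vb$ is uniformly bounded, so no solution can escape to infinity in finite time. The positivity and analyticity of $\sigma(s)$ are then immediate from its definition as $\sigma_0$ times the exponential of an analytic integral.

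The central step, and the main obstacle, is the algebraic identity
\begin{align*}
\Jb(\thetab,\gammab)\, \vb(\thetab,\gammab) = -\det_\red(\Jb(\thetab,\gammab))\, \gb(\thetab,\gammab).
\end{align*}
To prove this, I would exploit that $\Jb$ is a symmetric Laplacian (so $\Jb \1 = 0$ and $\1^\top \Jb = 0$), that $\Pb \Pb^\top = \Ib_{n-1}$ and $\Pb^\top \Pb = \Ib_n - \frac{1}{n} \1\1^\top$ is the orthogonal projection onto $\1^\perp$, and that $\1^\top \gb = 0$ by telescoping. The Laplacian property forces $\Jb \Pb^\top y \in \1^\perp$ for any $y \in \R^{n-1}$, hence $\Jb \Pb^\top = \Pb^\top (\Pb \Jb \Pb^\top)$. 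Writing $M = \Pb \Jb \Pb^\top$ and combining this with the adjugate identity $M\, \adj(M) = \det(M)\, \Ib_{n-1}$ together with $\Pb^\top \Pb \gb = \gb$, the identity drops out in a few lines.

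Given the identity, both properties follow immediately. For Property (1), differentiating $\sigma(s)\gb(\thetab(s),\gammab)$ in $s$ yields
\begin{align*}
\sigma'(s)\, \gb + \sigma(s)\, \Jb \vb = \sigma'(s)\, \gb - \sigma(s)\, \det_\red(\Jb)\, \gb,
\end{align*}
and since the defining formula for $\sigma$ gives $\sigma'(s) = \sigma(s) \det_\red(\Jb(\thetab(s),\gammab))$, this derivative vanishes. Evaluating at $s=0$ yields the constant value $\sigma_0\, \gb(\thetab_0,\gammab) = \omegab$. Property (2) is then immediate: the relation $\sigma'(s) = \sigma(s) \det_\red(\Jb(\thetab(s),\gammab))$ together with $\sigma(s) > 0$ directly gives the claimed sign.
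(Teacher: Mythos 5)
Your proof is correct and follows essentially the same route as the paper's: both hinge on the adjugate identity $M \adj(M) = \det(M)\, \Ib$ applied to $M = \Pb \Jb \Pb^\top$ together with the relation $\sigma'(s) = \sigma(s) \det_\red(\Jb(\thetab(s),\gammab))$, which forces $\frac{d}{ds}\bigl[\sigma(s)\gb(\thetab(s),\gammab)\bigr] = \0$. The only cosmetic difference is that the paper applies $\Pb$ first and then removes the projection via the nullspace argument, whereas you establish the unprojected identity $\Jb \vb = -\det_\red(\Jb)\, \gb$ directly (the same identity the paper derives at the start of its proof of Lemma \ref{lem:quadratic}).
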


Essentially, the conditions \eqref{eq:hypothesis} in Theorem \ref{thm:criterion1} cause $\sigma(s)$ in Lemma \ref{lem:flow} to have a local maximum at $\sigma(0) = \sigma_0$. We defer the details to the appendix, but give an example illustrating Theorem \ref{thm:criterion1} and Lemma \ref{lem:flow}. We start our example by setting
\begin{align} \label{eq:3}
\thetab_0 = (0.86727,1.84593,3.88114) \quad \text{and} \quad \gammab = (0.55284,1,1).
\end{align}
These values were obtained by using Mathematica to minimize $\delta(\thetab,\gammab)$ subject to the conditions $\det_\red(\Jb(\thetab,\gammab)) \\ = 0$ and $\gammab \in [0,1]^3$. In fact we have the value $\delta(\thetab_0,\gammab) = -0.37347$. In Figures \ref{fig:2a} and \ref{fig:2b} we plot the functions $\thetab(s)$ and $\sigma(s)$ from Lemma \ref{lem:flow}. In Figure \ref{fig:2c} we project the level set $\det_\red(\Jb(\thetab,\gammab)) = 0$, the normal vector $\nb(\thetab,\gammab)$, and the orbit of $\thetab(s)$ onto the mean-zero plane $\1^\perp$. Note that the orbit of $\thetab(s)$ just barely crosses the level set because it makes an obtuse angle (greater than a right angle) with the normal vector. This crossing results in the bifurcation and can also been seen in the local maximum of $\sigma(s)$ at $s = 0$.

\begin{figure}[H]
\captionsetup{font=scriptsize}
\centering
\begin{subfigure}{7cm}
  \centering
  \includegraphics[width=7cm]{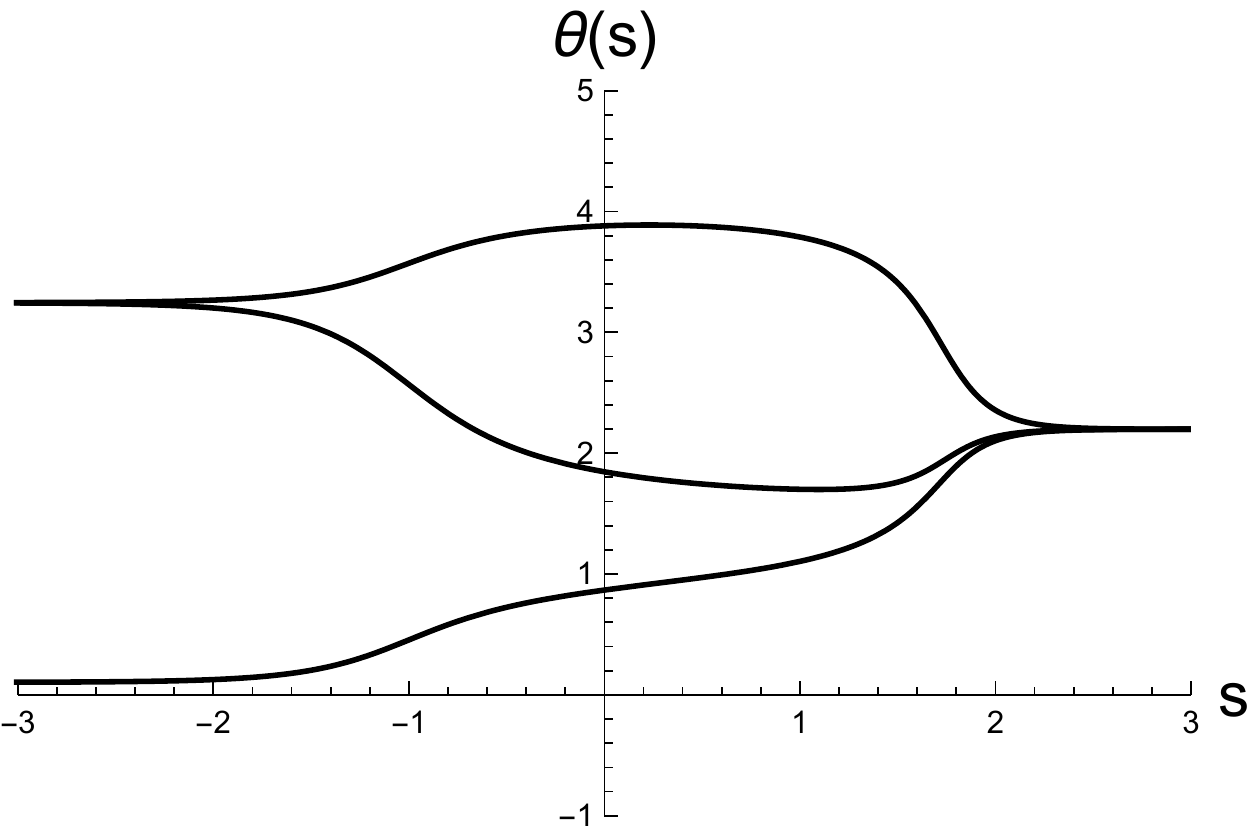}
  \caption{} \label{fig:2a}
\end{subfigure}
\begin{subfigure}{7cm}
  \centering
  \includegraphics[width=7cm]{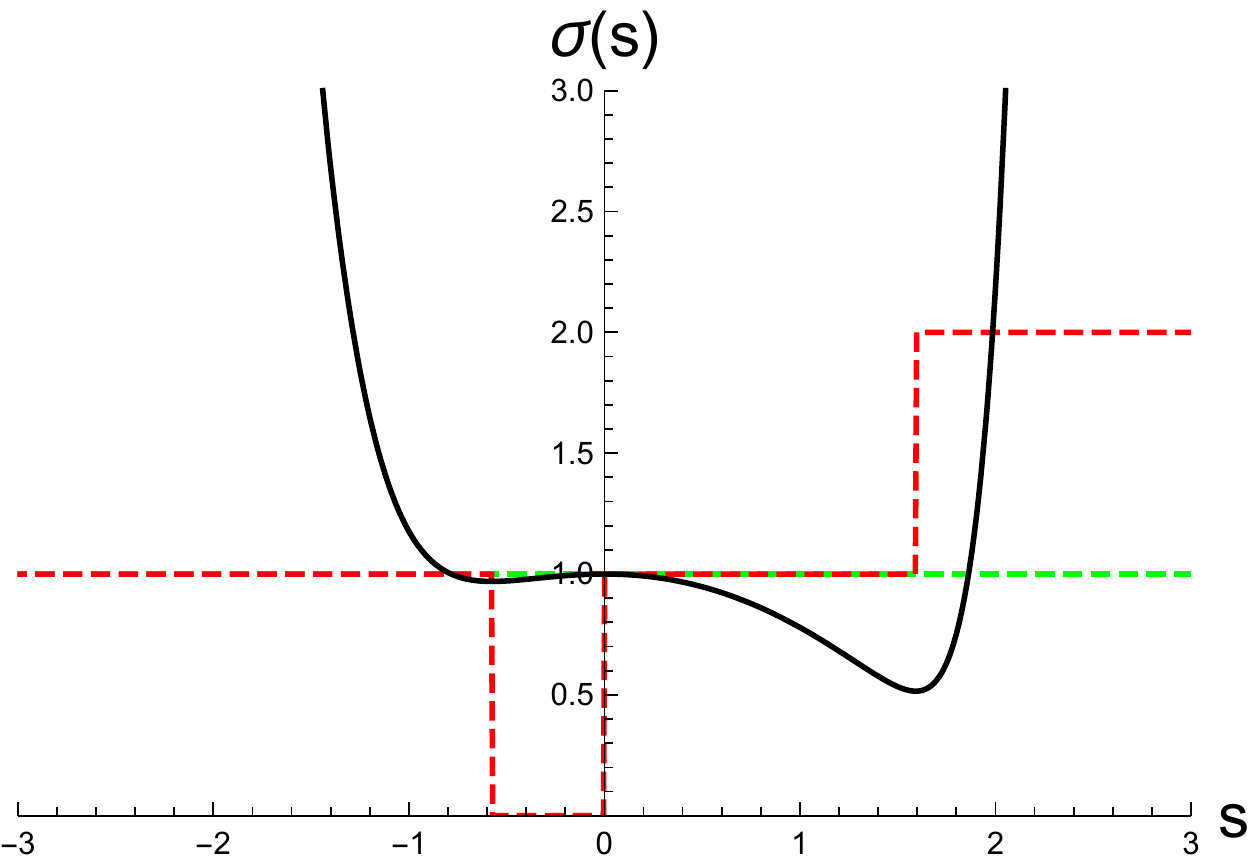}
  \caption{} \label{fig:2b}
\end{subfigure}
\medskip
\begin{subfigure}{7cm}
  \centering
  \includegraphics[width=7cm]{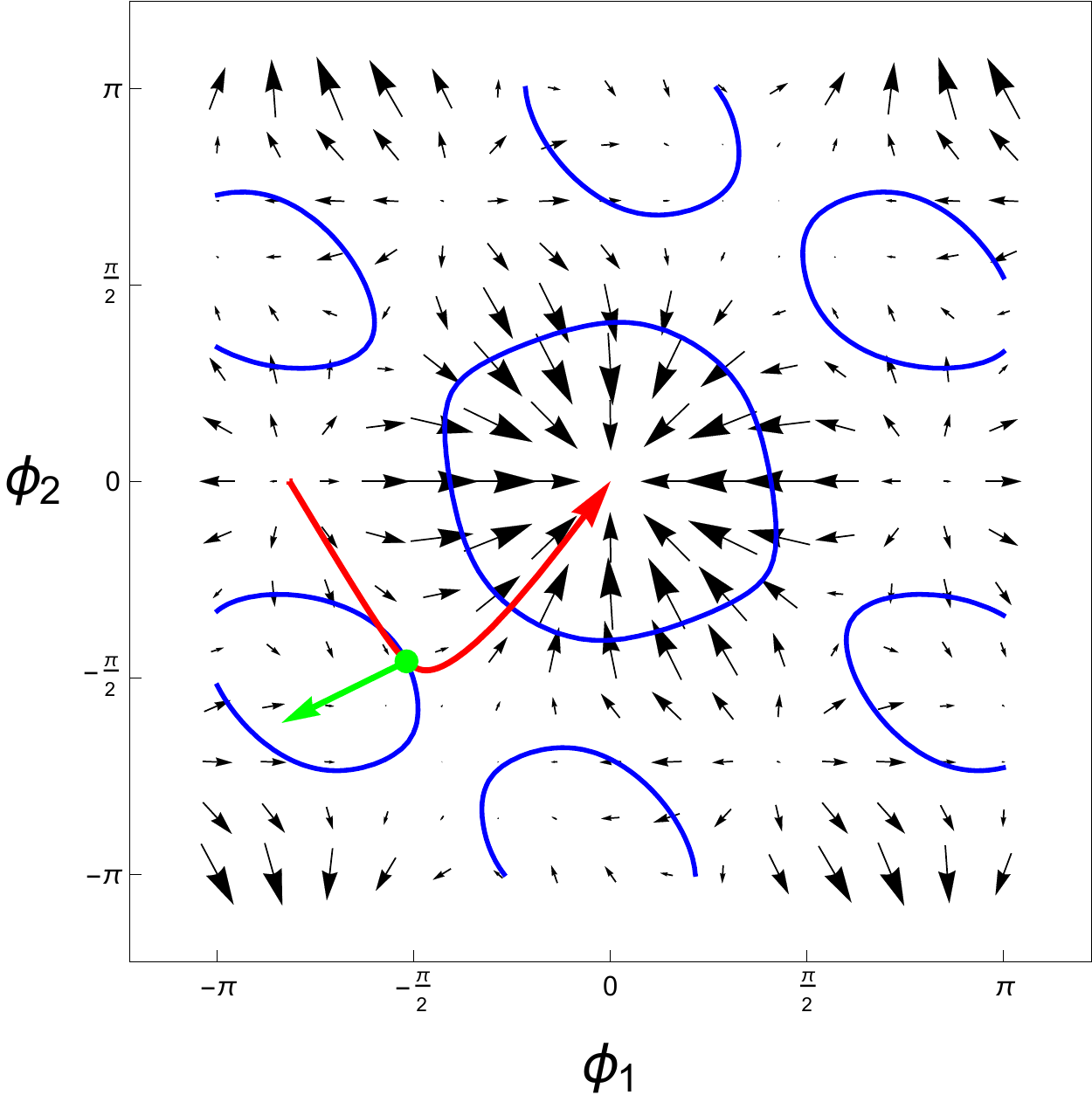}
  \caption{} \label{fig:2c}
\end{subfigure}
\caption{(A) A plot of $\thetab(s)$. (B) A plot of $\sigma(s)$. The dashed red lines indicate the number of positive eigenvalues of $\Jb(\thetab(s),\gammab)$. The dashed green line is simply to help see that $\sigma(s)$ has a local maximum at $s = 0$.(C) We projected $\R^3$ onto the mean-zero plane $\1^\perp$ via the change of variables $\phib = \Pb \thetab$. The blue ``circles" are connected components of the level set $\det_\red(\Jb(\thetab,\gammab)) = 0$, the green arrow is the normal vector $\nb(\thetab,\gammab)$, and the red curve is the orbit of $\thetab(s)$.} \label{fig:2}
\end{figure}

Now we give another example. This time however our bifurcation will have a branch consisting of stable phase-locked solutions. In fact, this is the bifurcation shown in Figure \ref{fig:1b}. Again we start by setting
\begin{align} \label{eq:5}
\thetab_0 = (3.80063,1.47760,6.77075,5,78071,4.79067) \quad \text{and} \quad \gammab = (0.20075,1,1,1,1).
\end{align}
Again, these values were obtained by using Mathematica to minimize $\delta(\thetab,\gammab)$ but now subject to the conditions $a_1(\thetab,\gammab) = 0$, $a_i(\thetab,\gammab) \ge 0$ for $2 \le i \le 4$, and $\gammab \in [0,1]^5$ where $\det(\lambda \Ib - \Jb(\thetab,\gammab)) = \lambda^5 + \sum_{i=1}^4 a_i(\thetab,\gammab) (-\lambda)^i$. In fact we have the value $\delta(\thetab_0,\gammab) = -0.70959$. Note that $a_0(\thetab,\gammab) = \det(\Jb(\thetab,\gammab)) = 0$ identically and that $a_1(\thetab,\gammab) = 0$ is equivalent to $\det_\red(\Jb(\thetab,\gammab)) = 0$. We also note that these conditions on the characteristic polynomial imply that $\Jb(\thetab,\gammab)$ has two zero eigenvalues (one trivial and one non-trivial) and that the rest are positive. In Figures \ref{fig:3a} and \ref{fig:3b} we again plot $\thetab(s)$ and $\sigma(s)$ from Lemma \ref{lem:flow}. Again we see that $\sigma(s)$ has a local maximum at $s = 0$ and that on one branch $\Jb(\thetab,\gammab)$ has four positive eigenvalues corresponding to stable phase-locked solutions. 

\begin{figure}[H]
\captionsetup{font=scriptsize}
\centering
\begin{subfigure}{7cm}
  \centering
  \includegraphics[width=7cm]{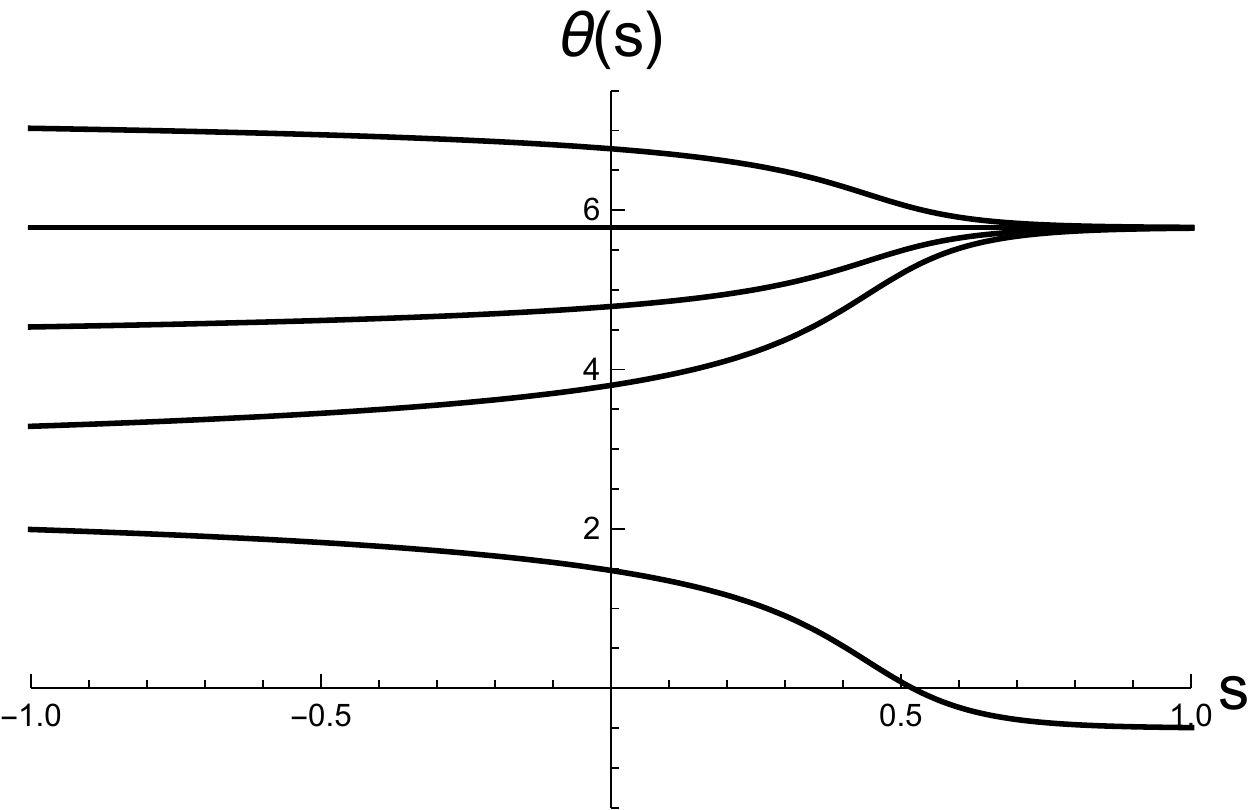}
  \caption{} \label{fig:3a}
\end{subfigure}
\begin{subfigure}{7cm}
  \centering
  \includegraphics[width=7cm]{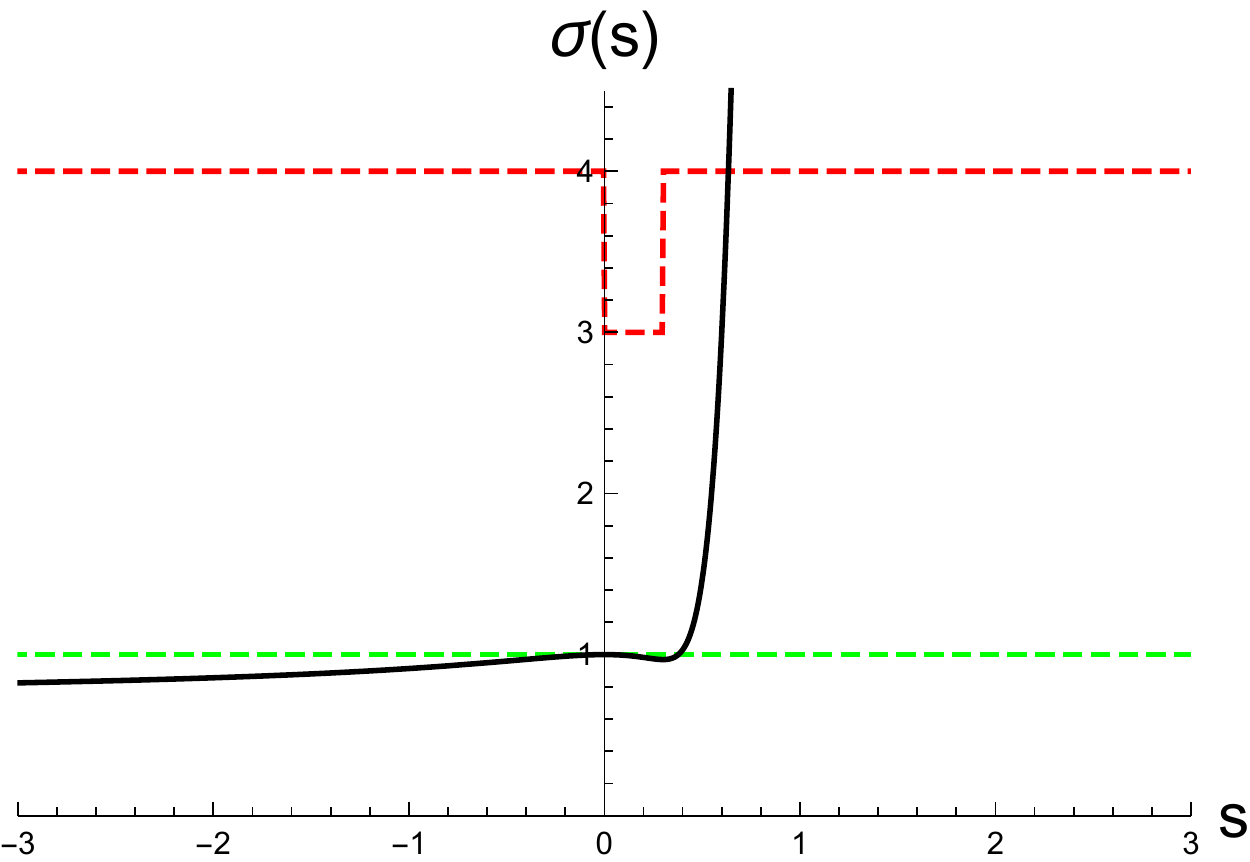}
  \caption{} \label{fig:3b}
\end{subfigure}
\caption{(A) A plot of $\thetab(s)$. (B) A plot of $\sigma(s)$. The dashed red lines indicate the number of positive eigenvalues of $\Jb(\thetab(s),\gammab)$. The dashed green line is simply to help see that $\sigma(s)$ has a local maximum at $s = 0$.} \label{fig:3}
\end{figure}

For our next main result we derive a condition on $\thetab_0$ that guarantees the existence of a $\gammab$ satisfying the conditions \eqref{eq:hypothesis} in Theorem \ref{thm:criterion1}. Before we can do this, however, we require some definitions and supporting lemmas. Again we defer the proofs to the appendix.

\begin{definition} \label{def:h}
Define the vector-valued function $\hb : \R^n \rightarrow \R^n$ by
\begin{align} \label{eq:h}
\h(\x)_i = \prod_{j \ne i} x_j.
\end{align}
\end{definition}

\begin{definition} \label{def:abc}
Define the three $n \times n$ matrices $\Ab(\thetab)$, $\B$, and $\Cb(\thetab)$ by
\begin{align*}
\Ab(\thetab)_{ij} := n \frac{\partial}{\partial \theta_j} \hb(\cos \etab)_i
\quad
\Bb_{ij} :=
\begin{cases}
1 & \mbox{if $i = j+1$,}
\\
-1 & \mbox{if $i = j$,}
\\
0 & \mbox{otherwise,}
\end{cases}
\quad
\Cb(\thetab)_{ij} =
\begin{cases}
n \left( \sum_{k \ne i} \frac{\sin \eta_k}{\cos \eta_k} \right) \hb(\cos \etab)_j & \mbox{if $i = j$,}
\\
-n \frac{\sin \eta_i}{\cos \eta_i} \hb(\cos \etab)_j & \mbox{if $i \ne j$.}
\end{cases}
\end{align*}
Note that $\Bb$ is the incidence matrix for a ring network with $n$ vertices and $n$ oriented edges $e_i = (i,i+1)$. Furthermore, define the two $n \times n$ matrices $\Sb(\thetab)$ and $\Tb(\thetab)$ by
\begin{align}
\Sb(\thetab) = n^2 \hb(\cos \etab)^\top \hb(\cos \etab) \quad \text{and} \quad \Tb(\thetab) = \Ab(\thetab) (\Bb^\top)^{-1} \Cb(\thetab)
\end{align}
where $(\Bb^\top)^{-1}$ is the pseudo-inverse of $\Bb^\top$.
\end{definition}

Continuing our example for $n = 3$ we have that
\begin{gather*}
\Ab(\thetab) =
\begin{pmatrix}
-\cos \eta _ 2 \sin \eta _ 3 & \cos \eta _ 3 \sin \eta _ 2 & \cos \eta _ 2 \sin \eta _ 3-\cos \eta _ 3 \sin \eta _ 2
\\
\cos \eta _ 3 \sin \eta _ 1-\cos \eta _ 1 \sin \eta _ 3 & -\cos \eta _ 3 \sin \eta _ 1 & \cos \eta _ 1 \sin \eta _ 3
\\
 \cos \eta _ 2 \sin \eta _ 1 & \cos \eta _ 1 \sin \eta _ 2-\cos \eta _ 2 \sin \eta _ 1 & -\cos \eta _ 1 \sin \eta _ 2
\end{pmatrix},
\\
\Bb =
\begin{pmatrix}
-1 & 0 & 1
\\
1 & -1 & 0
\\
0 & 1 & -1
\end{pmatrix},
\\
\Cb(\thetab) =
\begin{pmatrix}
3 \cos \eta _ 3 \sin \eta _ 2+3 \cos \eta _ 2 \sin \eta _ 3 & -3 \cos \eta _ 3 \sin \eta _ 1 & -3 \cos \eta _ 2 \sin \eta _ 1
\\
-3 \cos \eta _ 3 \sin \eta _ 2 & 3 \cos \eta _ 3 \sin \eta _ 1+3 \cos \eta _ 1 \sin \eta _ 3 & -3 \cos \eta _ 1 \sin \eta _ 2
\\
-3 \cos \eta _ 2 \sin \eta _ 3 & -3 \cos \eta _ 1 \sin \eta _ 3 & 3 \cos \eta _ 2 \sin \eta _ 1+3 \cos \eta _ 1 \sin \eta _ 2
\end{pmatrix}.
\end{gather*}

The motivation for Definitions \ref{def:h} and \ref{def:abc} are the quadratic form identities in Lemma \ref{lem:quadratic}.

\begin{lemma} \label{lem:quadratic}
We have the quadratic form identities
\begin{align}
\det_\red(\Jb(\thetab,\gammab))^2 = \hb(\gammab)^\top \Sb(\thetab) \hb(\gammab) \quad \text{and} \quad \delta(\thetab,\gammab) = \hb(\gammab)^\top \Tb(\thetab) \hb(\gammab).
\end{align}
Furthermore, the restriction $\hb : \R_{>0}^n \rightarrow \R_{>0}^n$ is a bijection with inverse
\begin{align} \label{eq:hinverse}
(\hb(\xb)^{-1})_i = \frac{1}{x_i} \left( \prod_{j=1}^n x_j \right)^{\frac{1}{n-1}}.
\end{align}
\end{lemma}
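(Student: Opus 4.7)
The lemma splits into three independent claims. The first identity and the inversion formula are short consequences of the Matrix Tree Theorem (Theorem \ref{thm:detred}) and basic algebra, while the $\delta$ identity is where the real work lies.

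For the first identity, apply Theorem \ref{thm:detred} to rewrite $\det_\red(\Jb(\thetab,\gammab)) = n\sum_i \hb(\gammab)_i \hb(\cos\etab)_i = n\,\hb(\gammab)^\top \hb(\cos\etab)$, then square and invoke the definition of $\Sb(\thetab)$. For the inversion formula, if $\xb = \hb(\yb)$, take the product of $x_i = \prod_{j \ne i} y_j$ over $i$ to get $\prod_i x_i = (\prod_j y_j)^{n-1}$, then solve $y_i = (\prod_j y_j)/x_i = (\prod_j x_j)^{1/(n-1)}/x_i$. Positivity is clearly preserved in either direction, so the restriction is a bijection.

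The $\delta$ identity requires expressing $\nb$ and $\vb$ in terms of $\hb(\gammab)$. The expression for $\nb$ is immediate: differentiating $\det_\red(\Jb) = n\,\hb(\gammab)^\top \hb(\cos\etab)$ in $\theta_j$ and reading off $\Ab(\thetab)_{ij} = n\,\partial_{\theta_j}\hb(\cos\etab)_i$ from Definition \ref{def:abc} gives $\nb = \Ab(\thetab)^\top \hb(\gammab)$. For $\vb$, I first derive $\Jb \vb = -\det_\red(\Jb)\,\gb$ from the cofactor identity $\adj(\tilde{\Jb})\tilde{\Jb} = \det(\tilde{\Jb})\Ib_{n-1}$ applied to $\tilde{\Jb} = \Pb\Jb\Pb^\top$, together with $\Pb^\top\Pb = \Ib - \frac{1}{n}\1\1^\top$ and $\Jb\1 = \0$. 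Next I use the oriented-incidence Laplacian factorizations $\Jb = \Bb\,\mathrm{diag}(\gamma_i\cos\eta_i)\,\Bb^\top$ and $\gb = \Bb\,\mathrm{diag}(\gamma_i)\sin\etab$ on the ring (which can be checked directly from the definitions of $\Jb$, $\gb$, and $\Bb$) to reduce $\Jb\vb = -\det_\red(\Jb)\gb$ to $\mathrm{diag}(\gamma_i\cos\eta_i)\Bb^\top\vb + \det_\red(\Jb)\mathrm{diag}(\gamma_i)\sin\etab \in \ker\Bb = \R\1$. Solving for $\Bb^\top\vb$ and fixing the residual scalar by $\Bb^\top\vb \in \1^\perp$ via Theorem \ref{thm:detred} produces an explicit componentwise formula for $\Bb^\top\vb$.

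A direct computation of $\Cb(\thetab)\hb(\gammab)$ from Definition \ref{def:abc}, using $\hb(\cos\etab)_i\hb(\gammab)_i = (\prod_j \gamma_j\cos\eta_j)/(\gamma_i\cos\eta_i)$ and MTT to collapse terms, recovers exactly the same formula, so $\Bb^\top\vb = \Cb(\thetab)\hb(\gammab)$ in $\1^\perp$. Since $\hb(\cos\etab)$ depends on $\thetab$ only through the $\1$-translation-invariant vector $\etab$, we have $\Ab(\thetab)\1 = \0$, so any $\R\1$ ambiguity when inverting $\Bb^\top$ is annihilated by $\Ab$. Assembling, $\delta = \nb^\top\vb = \hb(\gammab)^\top \Ab(\Bb^\top)^{-1}\Cb\hb(\gammab) = \hb(\gammab)^\top \Tb(\thetab)\hb(\gammab)$. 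I expect the pseudo-inverse bookkeeping, and in particular checking that the $\ker\Bb$ free scalar appearing in $\Bb^\top\vb$ agrees with the coefficient emerging directly from $\Cb\hb(\gammab)$, to be the main technical hurdle.
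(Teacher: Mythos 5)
Your proof is correct and follows essentially the same route as the paper: derive $\Jb\vb = -\det_\red(\Jb(\thetab,\gammab))\,\gb$ from the adjugate identity, use the factorizations $\Jb = \Bb\Db_\gammab\Db_{\cos\etab}\Bb^\top$ and $\gb = \Bb\Db_\gammab\sin\etab$ to solve for $\Bb^\top\vb$ up to $\ker\Bb = \R\1$, fix the scalar via $\1^\top\Bb^\top = \0$, and identify the result with $\Cb(\thetab)\hb(\gammab)$. You are in fact slightly more explicit than the paper on two points it leaves implicit — the elementary verifications of the $\Sb$ identity and of \eqref{eq:hinverse}, and the observation that $\Ab(\thetab)\1 = \0$ kills the $\R\1$ ambiguity in the pseudo-inverse $(\Bb^\top)^{-1}$ — but the substance is the same.
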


Next, we combine our matrices $\Sb(\thetab)$ and $\Tb(\thetab)$ in Definition \ref{def:r}.

\begin{definition} \label{def:r}
Define the parameterized family of symmetric $n \times n$ matrices $\Rb_\tau(\thetab)$ by
\begin{align}
\Rb_\tau(\thetab) = \Sb(\thetab) \tau + \frac{1}{2}(\Tb(\thetab) + \Tb(\thetab)^\top).
\end{align}
\end{definition}

The motivation for Definition \ref{def:r} is the identity
\begin{align} \label{eq:quadratic}
\hb(\gammab)^\top \Rb_\tau(\thetab) \hb(\gammab) = \det_\red(\Jb(\thetab,\gammab))^2 \tau + \delta(\thetab,\gammab)
\end{align}
which easily follows from Lemma \ref{lem:quadratic}. We now make the observation that if $\thetab_0$ and $\gammab$ satisfy \eqref{eq:hypothesis}, then $\xb^\top \Rb_\tau(\thetab_0) \xb < 0$ for all $\tau$ where $\xb = \hb(\gammab) \in \R_{>0}^n$ by Lemma \ref{lem:quadratic}. Conversely, we will show that if there exists vectors $\xb_\tau \in \R_{\ge 0}^n$ such that $\xb_\tau^\top \Rb_\tau(\thetab_0) \xb_\tau / \| \xb_\tau \|_1^2 < \epsilon < 0$ for all sufficiently large $\tau$, then there exists a vector $\xb \in \R_{>0}^n$ such that $\xb^\top \Rb_\tau(\theta_0) \xb < 0$. By Lemma 4 we can choose $\gammab$ such that $\xb = \hb(\gammab)$ in which case \eqref{eq:hypothesis} is satisfied. In order to demonstrate the existence of our vectors $\xb_\tau \in \R_{\ge 0}^n$ we require Lemma \ref{lem:min} which is a slight modification of a result of Kreps in \cite{KREPS1984105}. Before stating Lemma \ref{lem:min} we first make Definition \ref{def:min}.

\begin{definition} \label{def:min}
Let $\Rb$ be an $n \times n$ matrix and $\Ic \subseteq \{1, \dots, n\}$. Define $\Rb_\Ic$ to be the sub-matrix of $\Rb$ with rows and columns corresponding to the indices in $\Ic$. In other words, if $\Ic = \{ i_1 < \dots < i_m\}$, then $\Rb_\Ic$ is the $m \times m$ matrix with entries $(\Rb_\Ic)_{k\ell} = \Rb_{i_k i_\ell}$. Furthermore, given $i \in \Ic$, define $\Rb_{\Ic,i}$ to be the matrix $\Rb_\Ic$ with the $i$th row replaced by a row of all ones.
\end{definition}

\begin{lemma} \label{lem:min}
For any symmetric $n \times n$ matrix $\Rb$ we have the identity
\begin{align} \label{eq:min}
\min_{\xb \in \R_{\ge 0}^n} \frac{\xb^\top \Rb \xb}{\| \xb \|_1^2} = \min \left\{ \frac{\det(\Rb_\Ic)}{\sum_{i \in \Ic} \det(\Rb_{\Ic,i})} : \Ic \subseteq \{1,\dots,n\} \text{ satisfies } \min_{i \in \Ic} \det(\Rb_{\Ic,i}) > 0 \right\}
\end{align}
where $\| \xb \|_1 := \sum_{i=1}^n |x_i|$.
\end{lemma}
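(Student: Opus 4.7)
The plan is to establish the two inequalities in \eqref{eq:min} separately, combining Lagrange multipliers on the compact simplex $\{\xb \ge \0 : \|\xb\|_1 = 1\}$ with a minimum-support reduction and a bordered-Hessian sign analysis. The identity $\Rb_\Ic\,\adj(\Rb_\Ic) = \det(\Rb_\Ic)\,\Ib$ applied to $\1$ gives $\Rb_\Ic \mathbf{z}_\Ic = \det(\Rb_\Ic)\,\1$, where $\mathbf{z}_\Ic := \adj(\Rb_\Ic)\,\1$ has entries $(\mathbf{z}_\Ic)_i = \det(\Rb_{\Ic,i})$; this identity is used in both directions.

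For LHS $\le$ RHS, fix any $\Ic$ admissible on the RHS, so the vector $\mathbf{z}_\Ic$ has all positive components. Setting $\xb_\Ic := \mathbf{z}_\Ic/\|\mathbf{z}_\Ic\|_1 > 0$ and extending by zeros outside $\Ic$ yields
\begin{align*}
\frac{\xb^\top \Rb \xb}{\|\xb\|_1^2} \;=\; \xb_\Ic^\top \Rb_\Ic \xb_\Ic \;=\; \frac{\det(\Rb_\Ic)\,\1^\top \xb_\Ic}{\|\mathbf{z}_\Ic\|_1} \;=\; \frac{\det(\Rb_\Ic)}{\sum_{i \in \Ic}\det(\Rb_{\Ic,i})},
\end{align*}
bounding the LHS by each such ratio and hence by the RHS.

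For the reverse inequality, the LHS is attained on the compact simplex; among all minimizers pick one $\xb^\ast$ of smallest support $\Ic^\ast := \{i : x^\ast_i > 0\}$ and let $\rho$ denote the minimum value. Lagrange multipliers on the relative interior of the face with support $\Ic^\ast$ give $\Rb_{\Ic^\ast}\xb^\ast_{\Ic^\ast} = \rho\,\1$. A kernel-perturbation argument then forces $\xb^\ast_{\Ic^\ast} \propto \mathbf{z}_{\Ic^\ast}$: for any $\vb \in \ker(\Rb_{\Ic^\ast})$ the Rayleigh quotient of $\xb^\ast_{\Ic^\ast} + t\vb$ equals $\rho/(1+t\,\1^\top \vb)$, which cannot dip below $\rho$ by global minimality, while a $\vb$ not parallel to $\xb^\ast_{\Ic^\ast}$ drives some coordinate to zero and contradicts the minimality of the support. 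Combined with $\Rb_{\Ic^\ast}\mathbf{z}_{\Ic^\ast} = \det(\Rb_{\Ic^\ast})\,\1$, this yields the proportionality $x^\ast_i \propto \det(\Rb_{\Ic^\ast,i})$, so all $\det(\Rb_{\Ic^\ast,i})$ share a common sign and $\rho = \det(\Rb_{\Ic^\ast})/\sum_i\det(\Rb_{\Ic^\ast,i})$, uniformly whether $\det(\Rb_{\Ic^\ast})$ vanishes or not.

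The final step is to promote this common sign to positive. Since $\xb^\ast$ is a local minimum on its face, $\Rb_{\Ic^\ast}$ restricted to $\1^\perp$ is positive semidefinite; the minimum-support choice (applied to null vectors in $\1^\perp$) upgrades this to positive definite. Cofactor expansion along the last row gives
\begin{align*}
\det\begin{pmatrix}\Rb_{\Ic^\ast} & \1 \\ \1^\top & 0\end{pmatrix} \;=\; -\sum_{i \in \Ic^\ast}\det(\Rb_{\Ic^\ast,i}),
\end{align*}
and Haynsworth's inertia additivity forces the bordered matrix to have signature $(|\Ic^\ast|, 1, 0)$, so this determinant is negative. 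Hence $\sum_i \det(\Rb_{\Ic^\ast,i}) > 0$, the common sign is positive, and $\Ic = \Ic^\ast$ is admissible on the RHS with ratio equal to $\rho$, giving LHS $\ge$ RHS. The main obstacle is this final sign step: the first-order Lagrange condition alone does not distinguish minima from maxima on the active face, so one must extract second-order information via the bordered Hessian, with extra care in the degenerate case $\det(\Rb_{\Ic^\ast}) = 0$ where the inverse must be replaced by the adjugate.
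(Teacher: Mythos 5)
Your proof is correct in strategy but takes a genuinely different route from the paper's. The paper disposes of this lemma in a few lines by invoking Kreps \cite{KREPS1984105}: it records the equivalence ``$\min \ge t$ if and only if for every $\Ic$ either $\min_{i \in \Ic}\det(\Rb_{\Ic,i}) \le 0$ or the ratio is $\ge t$,'' negates it, and takes an infimum over $t$. You instead give a self-contained variational argument: the inequality $\le$ by testing the normalized vector $\adj(\Rb_\Ic)\1$, and the inequality $\ge$ by taking a minimizer of minimal support on the simplex, deriving $\Rb_{\Ic^\ast}\xb^\ast_{\Ic^\ast} = \rho\1$ from stationarity, identifying $\xb^\ast_{\Ic^\ast}$ with $\adj(\Rb_{\Ic^\ast})\1$ up to scale, and settling the sign of $\sum_{i}\det(\Rb_{\Ic^\ast,i})$ through the identity $\det\left(\begin{smallmatrix} \Rb_{\Ic^\ast} & \1 \\ \1^\top & 0 \end{smallmatrix}\right) = -\1^\top \adj(\Rb_{\Ic^\ast})\1$ and an inertia count. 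What your approach buys is independence from the cited reference (on which the paper leans rather heavily with ``we easily find'') and an explicit description of where and how the minimum is attained; what the paper's approach buys is brevity.

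Two steps need repair, though neither is fatal. First, the perturbed Rayleigh quotient along a kernel direction $\vb$ is not $\rho/(1+t\1^\top\vb)$ but $\rho(1+2t\1^\top\vb)/(1+t\1^\top\vb)^2$, whose first-order term in $t$ cancels; it dips below $\rho$ only at second order and only when $\rho > 0$, so for $\rho < 0$ your dip argument yields nothing. The fix is already implicit in your setup: when $\rho \ne 0$, the relation $\Rb_{\Ic^\ast}\xb^\ast_{\Ic^\ast} = \rho\1$ places $\1$ in the range of the symmetric matrix $\Rb_{\Ic^\ast}$, so every kernel vector is orthogonal to $\1$ and the quotient is exactly constant along it, letting your support-reduction argument run; when $\rho = 0$ the numerator vanishes identically along kernel directions and the same reduction applies. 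Second, invoking Haynsworth additivity for the signature $(|\Ic^\ast|,1,0)$ is circular as stated, since the relevant Schur complement is $-\1^\top\Rb_{\Ic^\ast}^{-1}\1$, whose sign is precisely the quantity in question (and $\Rb_{\Ic^\ast}$ need not be invertible). The clean argument uses only what you have already established: the bordered matrix is positive semidefinite on the $|\Ic^\ast|$-dimensional subspace $\{(\wb,s) : \wb \perp \1\}$, hence has at most one negative eigenvalue, and it is nonsingular (a null vector $(\ub,s)$ would force $\ub \perp \1$ and $\ub^\top\Rb_{\Ic^\ast}\ub = 0$, hence $\ub = \0$ and $s = 0$ by your positive definiteness on $\1^\perp$) with an isotropic vector $(\0,1)$, hence has at least one negative eigenvalue; its determinant is therefore negative and $\sum_{i \in \Ic^\ast}\det(\Rb_{\Ic^\ast,i}) > 0$ as you claim.
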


We note that $\det(\Rb_\tau(\thetab)_\Ic)$ and $\det(\Rb_\tau(\thetab)_{\Ic,i})$ are polynomials in $\tau$. In fact, they have degree at most one since $\Sb(\thetab)$ has rank one. Now in order to apply Lemma \ref{lem:min} we make Definition \ref{def:limit}.

\begin{definition} \label{def:limit}
Define
\begin{align*}
\Ic(\thetab) := \{ \Ic \subseteq \{1,\dots,n\} : \min_{i \in \Ic} \det(\Rb_\tau(\thetab)_{\Ic,i}) > 0 \text{ for all sufficiently large $\tau$} \}
\end{align*}
and for each $\Ic \in \Ic(\thetab)$ define
\begin{align*}
p_\Ic(\thetab) :=
\begin{cases}
1 & \mbox{if $\lim_{\tau \rightarrow \infty} \frac{\det(\Rb_\tau(\thetab)_\Ic)}{\sum_{i \in \Ic} \det(\Rb_\tau(\thetab)_{\Ic,i})} > 0$,}
\\
0 & \mbox{if $\lim_{\tau \rightarrow \infty} \frac{\det(\Rb_\tau(\thetab)_\Ic)}{\sum_{i \in \Ic} \det(\Rb_\tau(\thetab)_{\Ic,i})} = 0$,}
\\
-1 & \mbox{if $\lim_{\tau \rightarrow \infty} \frac{\det(\Rb_\tau(\thetab)_\Ic)}{\sum_{i \in \Ic} \det(\Rb_\tau(\thetab)_{\Ic,i})} < 0$,}
\end{cases}
\end{align*}
and
\begin{align*}
q_\Ic(\thetab) :=
\begin{cases}
1 & \mbox{if $\det(\Rb_\tau(\thetab)_\Ic) > 0$ for all sufficiently large $\tau$,}
\\
0 & \mbox{if $\det(\Rb_\tau(\thetab)_\Ic) = 0$ for all sufficiently large $\tau$,}
\\
-1 & \mbox{if $\det(\Rb_\tau(\thetab)_\Ic) < 0$ for all sufficiently large $\tau$,}
\end{cases}
\end{align*}
and set
\begin{align*}
\Delta_p(\thetab) := \min \{ p_\Ic(\thetab) : \Ic \in \Ic(\thetab) \} \quad \text{and} \quad \Delta_q(\thetab) := \min \{ q_\Ic(\thetab) : \Ic \in \Ic(\thetab) \}.
\end{align*}
\end{definition}

The motivation for the definitions of $\Delta_p(\thetab)$ and $\Delta_q(\thetab)$ is that they in some sense model the sign of \eqref{eq:min} for $\Rb_\tau(\thetab)$ for large values of $\tau$. We plot $\Delta_p(\thetab)$ and $\Delta_q(\thetab)$ in Figure \ref{fig:4} for $n = 3$. Now we can state our next main result Theorem \ref{thm:criterion2}.

\begin{theorem} \label{thm:criterion2}
There exists a $\gammab$ satisfying \eqref{eq:hypothesis} if $\Delta_p(\thetab_0) = -1$. Conversely, \eqref{eq:hypothesis} is not satisfied for any $\gammab$ if $\Delta_q(\thetab_0) \in \{0,1\}$.
\end{theorem}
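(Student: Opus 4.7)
The plan is to reduce \eqref{eq:hypothesis} to a sign condition on the quadratic form $\xb^\top \Rb_\tau(\thetab_0) \xb$ over the positive orthant and then to exploit Lemma \ref{lem:min}. By Lemma \ref{lem:quadratic}, the map $\hb : \R_{>0}^n \to \R_{>0}^n$ is a bijection, so choosing $\gammab \in \R_{>0}^n$ is equivalent to choosing $\xb = \hb(\gammab) \in \R_{>0}^n$. Because $\Sb(\thetab_0)$ is rank-one positive semidefinite (namely $n^2 \hb(\cos \etab_0) \hb(\cos \etab_0)^\top$), identity \eqref{eq:quadratic} together with Lemma \ref{lem:quadratic} shows that $\gammab$ satisfies \eqref{eq:hypothesis} if and only if the value $\xb^\top \Rb_\tau(\thetab_0) \xb$, which is independent of $\tau$ and equals $\delta(\thetab_0,\gammab)$, is negative.

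For the converse direction, suppose such a $\gammab$ exists. Normalizing $\xb = \hb(\gammab)$ on the unit $\ell^1$ simplex shows that the left-hand minimum in \eqref{eq:min} applied to $\Rb = \Rb_\tau(\thetab_0)$ is negative for every sufficiently large $\tau$. Lemma \ref{lem:min} then supplies some $\Ic_\tau \subseteq \{1,\dots,n\}$ with $\min_{i \in \Ic_\tau} \det(\Rb_\tau(\thetab_0)_{\Ic_\tau,i}) > 0$ and $\det(\Rb_\tau(\thetab_0)_{\Ic_\tau}) < 0$. Since $\Sb(\thetab_0)$ has rank one, $\Rb_\tau(\thetab_0)$ is linear in $\tau$ and all of the determinants in question are polynomials in $\tau$; pigeonholing over the finitely many subsets of $\{1,\dots,n\}$ yields a single $\Ic$ for which the two sign conditions persist for all sufficiently large $\tau$. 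Hence $\Ic \in \Ic(\thetab_0)$ with $q_\Ic(\thetab_0) = -1$, forcing $\Delta_q(\thetab_0) = -1$ and contradicting the hypothesis $\Delta_q(\thetab_0) \in \{0,1\}$.

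For the forward direction, assume $\Delta_p(\thetab_0) = -1$ and fix $\Ic_0 \in \Ic(\thetab_0)$ with $p_{\Ic_0}(\thetab_0) = -1$. Lemma \ref{lem:min} produces, for each sufficiently large $\tau$, a minimizer $\xb_\tau \in \R_{\ge 0}^n$ with $\|\xb_\tau\|_1 = 1$ and $\xb_\tau^\top \Rb_\tau(\thetab_0) \xb_\tau \le -c/2$ for some constant $c > 0$. Extracting a subsequential limit $\xb_\star \in \R_{\ge 0}^n$ on the compact unit simplex and expanding $\xb_\tau^\top \Rb_\tau(\thetab_0) \xb_\tau = \tau \, \xb_\tau^\top \Sb(\thetab_0) \xb_\tau + \xb_\tau^\top \sym(\Tb(\thetab_0)) \xb_\tau$, the positive semidefiniteness of $\Sb(\thetab_0)$ combined with boundedness of the second term forces $\xb_\star^\top \Sb(\thetab_0) \xb_\star = 0$ and $\xb_\star^\top \Tb(\thetab_0) \xb_\star \le -c/2 < 0$.

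The last and hardest step is then to upgrade $\xb_\star \in \R_{\ge 0}^n$ to a strictly positive $\xb \in \R_{>0}^n$ while preserving both $\xb^\top \Sb(\thetab_0) \xb = 0$ and $\xb^\top \Tb(\thetab_0) \xb < 0$. Writing $H = \hb(\cos \etab_0)^\perp$ for the kernel of $\Sb(\thetab_0)$, this reduces to producing a strictly positive vector in $H$ and adding a small multiple of it to $\xb_\star$; the strictness of $\xb_\star^\top \Tb(\thetab_0) \xb_\star < 0$ then keeps the quadratic form negative, and $H$-membership is preserved by construction. The hypothesis $\Delta_p(\thetab_0) = -1$ must enter at this point to rule out the degenerate case where $\hb(\cos \etab_0)$ is single-signed and $H \cap \R_{>0}^n$ is empty (for in that case no $\gammab \in \R_{>0}^n$ can even achieve $\det_\red(\Jb(\thetab_0,\gammab)) = 0$, and one checks that the approach of minimizers $\xb_\tau$ to the corresponding face of the simplex is incompatible with $p_{\Ic_0} = -1$). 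Once the strictly positive $\xb$ is in hand, Lemma \ref{lem:quadratic} delivers $\gammab = \hb^{-1}(\xb) \in \R_{>0}^n$ satisfying \eqref{eq:hypothesis}.
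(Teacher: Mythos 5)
Your proposal is correct and follows essentially the same route as the paper's proof: translate \eqref{eq:hypothesis} via the bijection $\hb$ into a sign condition on $\xb^\top \Rb_\tau(\thetab_0)\xb$ over $\R_{>0}^n$, apply Lemma \ref{lem:min}, extract a limit of $\ell^1$-normalized minimizers using the rank-one positive semidefiniteness of $\Sb(\thetab_0)$ to force $\hb(\cos\etab_0)^\top\xb_\star = 0$ and $\xb_\star^\top\Tb(\thetab_0)\xb_\star < 0$, perturb to a strictly positive vector, and for the converse pigeonhole over the finitely many index sets and use that the relevant determinants are polynomials of degree at most one in $\tau$. The only stylistic difference is that you make explicit the need to rule out the case where $\hb(\cos\etab_0)$ is single-signed before perturbing into $\R_{>0}^n$, a point the paper dispatches with a one-line assertion.
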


We make a brief comment on the strength of Theorem \ref{thm:criterion2}. Now
\begin{align} \label{eq:exhaustive}
 \{ \thetab \in \R^n : \Delta_p(\thetab) \ne \Delta_q(\thetab) \} \subseteq \bigcup_{\Ic \subseteq \{1, \dots,n\}} \{ \thetab \in \R^n : \ell_\Ic(\thetab) = 0\}
\end{align}
where $\ell_\Ic(\thetab) = \lim_{\tau \rightarrow \infty} \frac{1}{\tau} \det(\Rb_\tau(\thetab)_\Ic)$. This is because $\det(\Rb_\tau(\thetab)_\Ic)$ and $\det(\Rb_\tau(\thetab)_{\Ic,i})$ are polynomials of degree at most one in $\tau$ and therefore $p_\Ic(\thetab) = q_\Ic(\thetab)$ if $\ell_\Ic(\thetab) \ne 0$. By properties of the determinant we know that $\ell_\Ic(\thetab)$ is a sum of determinants of matrices with entries from $\Sb(\thetab)$ and $\Tb(\thetab)$. Since the entries of $\Sb(\thetab)$ and $\Tb(\thetab)$ are analytic we easily deduce that $\ell_\Ic(\thetab)$ is also analytic. Therefore both sets in \eqref{eq:exhaustive} have $n$-dimensional Lebesgue measure zero if $\ell_\Ic(\thetab)$ is not identically zero for all $\Ic \subseteq \{1,\dots,n\}$. In particular, this would imply that the exceptional set of Theorem \ref{thm:criterion2} has $n$-dimensional Lebesgue measure zero. Unfortunately, we have yet to discover a proof that $\ell_\Ic(\thetab)$ is not identically zero due to the complexity of the definition of $\ell_\Ic(\thetab)$. However, we conjecture that this is the case and give some examples for $n = 3$. In particular,
\begin{gather*}
\ell_{\{1\}}(\thetab) =
\cos ^2\eta _2 \cos ^2\eta _3,
\\
\ell_{\{1,2\}}(\thetab) =
-6 \sin \eta _1 \sin \eta _2 \cos \eta _1 \cos \eta _2 \cos ^4\eta _3-3 \sin ^2\eta _1 \cos ^2\eta _2 \cos ^4\eta _3 -3 \sin ^2\eta _2 \cos ^2\eta _1 \cos ^4\eta _3-
\\
3 \sin \eta _1 \sin \eta _3 \cos \eta _1 \cos ^2\eta _2 \cos ^3\eta _3-3 \sin \eta _2 \sin \eta _3 \cos ^2\eta _1 \cos \eta _2 \cos ^3\eta _3,
\\
\ell_{\{1,2,3\}}(\thetab) =
9 \sin \eta _2 \sin ^3\eta _3 \cos ^3\eta _2 \cos \eta _3 \cos ^4\eta _1+9 \sin ^3\eta _2 \sin \eta _3 \cos \eta _2 \cos ^3\eta _3 \cos ^4\eta _1+
\\
9 \sin \eta _1 \sin ^3\eta _2 \cos \eta _2 \cos ^4\eta _3 \cos ^3\eta _1+9 \sin \eta _1 \sin ^3\eta _3 \cos ^4\eta _2 \cos \eta _3 \cos ^3\eta _1+
\\
9 \sin ^3\eta _1 \sin \eta _2 \cos ^3\eta _2 \cos ^4\eta _3 \cos \eta _1+9 \sin ^3\eta _1 \sin \eta _3 \cos ^4\eta _2 \cos ^3\eta _3 \cos \eta _1+
\\
18 \sin ^2\eta _2 \sin ^2\eta _3 \cos ^2\eta _2 \cos ^2\eta _3 \cos ^4\eta _1+18 \sin ^2\eta _1 \sin ^2\eta _2 \cos ^2\eta _2 \cos ^4\eta _3 \cos ^2\eta _1+
\\
18 \sin ^2\eta _1 \sin ^2\eta _3 \cos ^4\eta _2 \cos^2\eta _3 \cos ^2\eta _1+45 \sin \eta _1 \sin \eta _2 \sin ^2\eta _3 \cos ^3\eta _2 \cos ^2\eta _3 \cos ^3\eta _1+
\\
45 \sin \eta _1 \sin ^2\eta _2 \sin \eta _3 \cos ^2\eta _2 \cos ^3\eta _3 \cos ^3\eta _1+ 45 \sin ^2\eta _1 \sin \eta _2 \sin \eta _3 \cos ^3\eta _2 \cos ^3\eta _3 \cos ^2\eta _1,
\end{gather*}
and
\begin{align*}
\ell_{\{1\}}(\thetab_0) = \frac{1}{16} \ne 0, \quad \ell_{\{1,2\}}(\thetab_0) = -\frac{27}{128} \ne 0, \quad \ell_{\{1,2,3\}}(\thetab_0) = \frac{2187}{4096} \ne 0,
\end{align*}
where $\thetab_0 = (0,\pi/3,2\pi/3)$. Of course the remaining functions $\ell_{\{2\}}(\thetab)$, $\ell_{\{3\}}(\thetab)$, $\ell_{\{1,3\}}(\thetab)$, and $\ell_{\{2,3\}}(\thetab)$ cannot be identically zero by symmetry.

Finally, we show that the two sets
\begin{align} \label{eq:set1}
\{ \thetab \in \R^n : \Delta_p(\thetab) = -1 \}
\end{align}
and
\begin{align} \label{eq:set2}
\{ \thetab \in \R^n : \text{$\Delta_p(\thetab) = -1$, $\cos \etab$ has no zero components, and $n_+(\cos \etab) = n-1$} \}
\end{align}
are non-empty for all $n \ge 3$ and $n \ge 5$ respectively. Therefore by applying Theorems \ref{thm:criterion1} and \ref{thm:criterion2} we deduce our final main result Theorem \ref{thm:existence}.

\begin{theorem} \label{thm:existence}
For every $n \ge 3$, there exists an $\omegab$ and $\gammab$ for which the Kuramoto model \eqref{eq:model} has a bifurcation which involves two branches of phase-locked solutions colliding as $\sigma$ increases. Furthermore, for every $n \ge 5$, there exists an $\omegab$ and $\gammab$ for which the Kuramoto model \eqref{eq:model} has a bifurcation which involves a branch of stable phase-locked solutions colliding with a branch of 1-saddles as $\sigma$ increases.
\end{theorem}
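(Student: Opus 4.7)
The plan is to derive Theorem \ref{thm:existence} directly from Theorems \ref{thm:criterion1} and \ref{thm:criterion2}, reducing the entire statement to the non-emptiness of the sets \eqref{eq:set1} and \eqref{eq:set2}. Given any $\thetab_0$ in the set \eqref{eq:set1}, Theorem \ref{thm:criterion2} furnishes a $\gammab$ satisfying \eqref{eq:hypothesis}; then, for any $\sigma_0 > 0$, setting $\omegab := \sigma_0\,\gb(\thetab_0,\gammab)$ and invoking Theorem \ref{thm:criterion1} produces the colliding branches. If moreover $\thetab_0$ lies in the set \eqref{eq:set2}, the extra conditions $n_+(\cos\etab_0) = n-1$ and $\cos\etab_0$ nowhere zero activate the final clause of Theorem \ref{thm:criterion1}, forcing one branch to consist of stable phase-locked solutions and the other of 1-saddles.

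For the base cases $n = 3$ (for \eqref{eq:set1}) and $n = 5$ (for \eqref{eq:set2}), the explicit configurations \eqref{eq:3} and \eqref{eq:5} are the witnesses. Verifying $\Delta_p(\thetab_0) = -1$ is a finite computation, because $\Sb(\thetab)$ has rank one and so each $\det(\Rb_\tau(\thetab_0)_\Ic)$ and $\det(\Rb_\tau(\thetab_0)_{\Ic,i})$ is affine in $\tau$; one only has to enumerate the $2^n - 1$ non-empty subsets $\Ic \subseteq \{1,\dots,n\}$, identify those in $\Ic(\thetab_0)$, and read off the sign of the limit ratio from the leading coefficients. For the $n=5$ configuration I would also verify directly that $\cos\etab_0$ has exactly one negative and no zero components, so that the witness also lies in \eqref{eq:set2}.

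For the inductive step I plan a vertex-duplication construction. Given a witness $\thetab^{(n)}$ for the set \eqref{eq:set1} on the $n$-ring, form $\thetab^{(n+1)}$ by inserting a new vertex adjacent to some existing $\theta_i^{(n)}$ with $\theta_{\text{new}} := \theta_i^{(n)}$, so that one of the two new edges has $\eta = 0$ and $\cos\eta = 1$, while the other inherits the original $\eta_i^{(n)}$. Under this insertion, $\Sb(\thetab^{(n+1)})$ and $\Tb(\thetab^{(n+1)})$ contain $\Sb(\thetab^{(n)})$ and $\Tb(\thetab^{(n)})$ as principal submatrices up to indexing, and the new row and column of $\Tb$ carry vanishing $\sin\eta_{\text{new}} = 0$ factors. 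The hope is that any witness vector $\xb \in \R_{\ge 0}^n$ for the negativity of \eqref{eq:min} on the $n$-ring lifts, by zero-padding the new coordinate, to a witness on the $(n+1)$-ring, so $\Delta_p$ remains $-1$. To inherit the set \eqref{eq:set2} we pick the duplicated vertex so that its adjacent edge has $\cos\eta_i > 0$; this preserves $n_+(\cos\etab) = (n+1) - 1$ and introduces no zero cosine.

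The main obstacle will be ruling out the possibility that a subset $\Ic \subseteq \{1,\dots,n+1\}$ containing the new index enters $\Ic(\thetab^{(n+1)})$ with $p_\Ic = +1$ and pushes the minimum of \eqref{eq:min} above zero. The plan is to expand $\det(\Rb_\tau(\thetab^{(n+1)})_\Ic)$ by cofactors along the new row and column; once the vanishing sine entries are factored out, the residual determinant should reduce to one intrinsic to the $n$-ring, allowing such an $\Ic$ to be traded for a subset already enumerated. If the combinatorics of this reduction turn out to be delicate in special configurations, the backup is to perturb $\thetab^{(n+1)}$ slightly into general position and apply continuity, since $\Delta_p = -1$ is an open condition away from the common zero set of the polynomials $\ell_\Ic(\thetab)$ discussed after Theorem \ref{thm:criterion2}.
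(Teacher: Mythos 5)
Your reduction and your padding construction are essentially the paper's own proof: the witnesses are \eqref{eq:3} and \eqref{eq:5}, and the extension to general $n$ is obtained by appending copies of $\theta_1$, i.e.\ $\thetabt_0 = (\theta_1,\dots,\theta_n,\theta_1,\dots,\theta_1)$, so that every new edge has zero angle difference. Your claim that $\Sb$ and $\Tb$ then reappear (up to the positive factor $(\nt/n)^2$) as the upper-left principal submatrices of $\Sbt(\thetabt_0)$ and $\Tbt(\thetabt_0)$ is precisely Lemma \ref{lem:st}; note that this is the one load-bearing step you have only asserted as a ``hope,'' and its proof uses the factorization $\Ab(\thetab) = n \Hb(\cos\etab)\Db_{\sin\etab}\Bb^\top$ together with the block structure of $\Cb$, not merely the vanishing of the new sines. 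The one genuine correction concerns your ``main obstacle,'' which is vacuous: $\Delta_p(\thetab)$ is a \emph{minimum} of $p_\Ic(\thetab)\in\{-1,0,1\}$ over $\Ic\in\Ic(\thetab)$, so exhibiting a single admissible subset with $p_\Ic(\thetabt_0)=-1$ already forces $\Delta_p(\thetabt_0)=-1$; enlarging the collection of subsets entering \eqref{eq:min} can only lower the minimum, never push it above zero. The paper therefore simply transports $\Ic=\{1,2\}$: by Lemma \ref{lem:st} the determinants $\det(\Rbt_\tau(\thetabt_0)_{\{1,2\}})$ and $\det(\Rbt_\tau(\thetabt_0)_{\{1,2\},i})$ are positive multiples of those computed for $n=3$ (resp.\ $n=5$), so both the membership condition $\Ic\in\Ic(\thetabt_0)$ and the sign of the limiting ratio are inherited verbatim, and Theorem \ref{thm:criterion2} applies. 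All of your cofactor-expansion and general-position perturbation machinery for the inductive step can be discarded (equivalently, your zero-padded witness vector already suffices via Lemma \ref{lem:min}). Your bookkeeping for \eqref{eq:set2} is correct: appending equal angles adds only edges with cosine $1$, so $n_+(\cos\etab)$ remains one less than the number of edges and no zero cosines appear.
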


We include the proof of Theorem \ref{thm:existence} here because the proof is an application of an important property of the matrices $\Sb(\thetab)$ and $\Tb(\thetab)$ given in Lemma \ref{lem:st}.

\begin{lemma} \label{lem:st}
Let $1 \le n \le \nt$ and for any $\thetab = (\theta_1, \dots, \theta_n) \in \R^n$ set $\thetabt = (\theta_1, \dots, \theta_n, \theta_1, \dots, \theta_1) \in \R^{\nt}$. Then
\begin{align*}
\Sbt(\thetabt) =
\begin{pmatrix}
(\frac{\nt}{n})^2 \Sb(\thetab) & \hdots
\\
\vdots & \ddots
\end{pmatrix}
\quad \text{and} \quad
\Tbt(\thetabt) =
\begin{pmatrix}
(\frac{\nt}{n})^2 \Tb(\thetab) & \hdots
\\
\vdots & \ddots
\end{pmatrix}.
\end{align*}
In other words, $\Sb(\thetab)$ and $\Tb(\thetab)$ are positive multiples of the upper left $n \times n$ submatrices of $\Sbt(\thetabt)$ and $\Tbt(\thetabt)$ respectively.
\end{lemma}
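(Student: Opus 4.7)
My plan is to exploit the structure of $\tilde\thetab$: appending $\tilde n - n$ copies of $\theta_1$ forces $\tilde\eta_k = \eta_k$ for $1 \le k \le n$ (note that the wrap edge $\tilde\eta_n = \tilde\theta_{n+1} - \tilde\theta_n = \theta_1 - \theta_n$ still coincides with $\eta_n$) and $\tilde\eta_k = 0$ for $n < k \le \tilde n$, so that $\cos \tilde\eta_k = 1$ and $\sin \tilde\eta_k = 0$ on every padded edge. The claim for $\tilde\Sb$ is then immediate: since $\Sb = n^2 \hb(\cos\etab) \hb(\cos\etab)^\top$ is an outer product and
\[
\hb_{\tilde n}(\cos \tilde\etab)_i = \prod_{\substack{1 \le k \le \tilde n \\ k \ne i}} \cos \tilde\eta_k = \hb_n(\cos \etab)_i \cdot 1^{\tilde n - n} = \hb_n(\cos \etab)_i
\]
for $1 \le i \le n$, the upper-left $n \times n$ block of $\tilde\Sb(\tilde\thetab)$ is exactly $(\tilde n/n)^2 \Sb(\thetab)$.

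For $\tilde\Tb$ I would use the quadratic-form identity $\hb(\gammab)^\top \Tb(\thetab) \hb(\gammab) = \delta(\thetab,\gammab)$ from Lemma \ref{lem:quadratic} combined with a limiting argument in an auxiliary coupling parameter. Set $\tilde\gammab = (\gamma_1, \dots, \gamma_n, t, \dots, t)$. A direct computation yields $\hb(\tilde\gammab)_i = t^{\tilde n - n} \hb_n(\gammab)_i$ for $i \le n$ and $\hb(\tilde\gammab)_i = O(t^{\tilde n - n - 1})$ for $i > n$, so that $t^{-(\tilde n - n)} \hb(\tilde\gammab) \to (\hb_n(\gammab), \0)$, and hence $t^{-2(\tilde n - n)} \hb(\tilde\gammab)^\top \tilde\Tb(\tilde\thetab) \hb(\tilde\gammab) \to \hb_n(\gammab)^\top \tilde\Tb_1 \hb_n(\gammab)$ with $\tilde\Tb_1$ the upper-left $n \times n$ block. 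On the other side, the Matrix Tree formula expands to $\det_\red(\tilde\Jb) = (\tilde n/n) t^{\tilde n - n} \det_\red(\Jb) + O(t^{\tilde n - n - 1})$, while $\tilde\gb$ at the base point is $t$-independent because $\sin \tilde\eta_k = 0$ for $k > n$. The crux is analyzing $\tilde\vb$, which solves $\tilde\Jb \tilde\vb = -\det_\red(\tilde\Jb) \tilde\gb$ on $\1^\perp$: decomposing $\tilde\Jb$ into the natural $2 \times 2$ block form (``original'' vs.\ ``padded'' coordinates) with the lower-right block scaling like $t$, a Schur complement argument in the limit $t \to \infty$ shows that the padded components $\tilde\vb_{n+1}, \dots, \tilde\vb_{\tilde n}$ align with $\tilde\vb_1$ (the padded vertices effectively ``fuse'' with vertex~$1$), and the induced equation on the first $n$ coordinates reduces to $\Jb \vb = -\det_\red(\Jb) \gb$ up to the overall factor $\tilde n/n$. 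Combining with the corresponding asymptotic for $\tilde\nb = \nabla_{\tilde\thetab} \det_\red(\tilde\Jb)$ yields $t^{-2(\tilde n - n)} \delta(\tilde\thetab, \tilde\gammab) \to (\tilde n/n)^2 \delta(\thetab, \gammab)$. Matching both sides for every $\gammab \in \R_{>0}^n$ and using that $\hb_n$ is a bijection on $\R_{>0}^n$ (Lemma \ref{lem:quadratic}), together with analyticity, gives equality of the symmetric parts of $\tilde\Tb_1$ and $(\tilde n/n)^2 \Tb(\thetab)$, which is precisely what feeds into Definition \ref{def:r} of $\Rb_\tau$.

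The hardest part will be the Schur complement / fusion analysis for $\tilde\vb$: verifying rigorously that the padded components converge to $\tilde\vb_1$ to leading order in $t$ and that all subleading contributions to $\tilde\nb^\top \tilde\vb$ are genuinely $o(t^{2(\tilde n - n)})$. To promote the symmetric-part equality to the full matrix equality as stated in the lemma, I would complement the limiting argument with a direct block-matrix computation of $\tilde\Tb = \tilde\Ab (\tilde\Bb^\top)^{-1} \tilde\Cb$ at the specific base point, exploiting $\tilde\Ab \1 = \0$ and $\1^\top \tilde\Cb = \0$ to eliminate dependence on the choice of pseudoinverse and reduce the product on the upper-left block to the original $n$-dimensional quantities $\Ab, \Bb, \Cb$.
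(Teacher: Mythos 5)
Your treatment of $\Sbt$ is correct and is exactly the paper's argument: $\etabt=(\eta_1,\dots,\eta_n,0,\dots,0)$, so $\hb(\cos\etabt)_i=\hb(\cos\etab)_i$ for $i\le n$, and the prefactor $\nt^2$ versus $n^2$ produces $(\nt/n)^2$ in the outer product. The problem is your primary route for $\Tbt$. Even if carried out, the limiting argument in $t$ only identifies the quadratic form $\xb\mapsto\xb^\top\Tbt_{11}\xb$ on the open cone $\hb(\R_{>0}^n)$, hence only the symmetric part $\frac12(\Tbt_{11}+\Tbt_{11}^\top)$, whereas the lemma asserts equality of the matrices themselves. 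More seriously, the entire content of that route is the asymptotic analysis of $\vb(\thetabt,\tilde{\gammab})$ and $\nb(\thetabt,\tilde{\gammab})$ as $t\to\infty$, which you explicitly leave undone. You cannot get $\lim_t t^{-2(\nt-n)}\delta(\thetabt,\tilde{\gammab})$ from Lemma \ref{lem:quadratic} itself (that would be circular), so you would have to control $\adj(\Pb\Jb(\thetabt,\tilde{\gammab})\Pb^\top)$ directly, an $(\nt-1)\times(\nt-1)$ adjugate whose entries are degree-$(\nt-2)$ polynomials in $t$, and verify that every subleading contribution to $\nb^\top\vb$ is $o(t^{2(\nt-n)})$. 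That Schur-complement/fusion step is at least as hard as the lemma; as written it is a genuine gap, not a deferred detail.

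The paper proves the $\Tbt$ claim by exactly the direct block computation you relegate to your final sentence, and the one ingredient you are missing to make it short is the factorization $\Ab(\thetab)=n\,\Hb(\cos\etab)\,\Db_{\sin\etab}\,\Bb^\top$, where $\Hb$ is the Jacobian of $\hb$. With it, $\Tbt=\nt\,\Hbt(\cos\etabt)\,\Dbt_{\sin\etabt}\,\Bbt^\top(\Bbt^\top)^{-1}\Cbt=\nt\,\Hbt(\cos\etabt)\,\Dbt_{\sin\etabt}\,\Cbt$, since the columns of $\Cbt$ sum to zero (your observation $\1^\top\Cbt=\0$) and hence lie in the range of $\Bbt^\top$, so the pseudoinverse ambiguity disappears without any appeal to $\Abt\1=\0$. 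Then $\Dbt_{\sin\etabt}$ vanishes on the padded coordinates because $\sin\tilde\eta_k=0$ there, the lower-left block of $\Cbt$ vanishes for the same reason, the upper-left blocks of $\Hbt(\cos\etabt)$ and $\Cbt$ are $\Hb(\cos\etab)$ and $\frac{\nt}{n}\Cb(\thetab)$, and the triple product collapses on the upper-left block to $\frac{\nt^2}{n}\Hb(\cos\etab)\Db_{\sin\etab}\Cb(\thetab)=(\nt/n)^2\,\Tb(\thetab)$, giving the full (unsymmetrized) matrix identity. I recommend dropping the limiting argument entirely and writing out this block product.
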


\begin{proof}[Proof of Theorem \ref{thm:existence}]
We start with the first statement for $n \ge 3$ and set
\begin{align} \label{eq:nge3}
\thetabt_0 = (0.86727,1.84593,3.88114,0.86727,\dots,0.86727) \in \R^n.
\end{align}
Note that $\thetabt_0$ reduces to $\thetab_0$ in \eqref{eq:3} when $n = 3$. By Lemma \ref{lem:st} we know that
\begin{align*}
\det(\Rbt_\tau(\thetabt_0)_{\{1,2\}}) = \left(\frac{n}{3} \right)^4 \det(\Rb_\tau(\thetab_0)_{\{1,2\}}) = \left(\frac{n}{3} \right)^4 \left(-0.01174 n^4 \tau -0.01162 \right)
\end{align*}
and
\begin{align*}
\det(\Rbt_\tau(\thetabt_0)_{\{1,2\},i}) = \left(\frac{n}{3} \right)^2 \det(\Rb_\tau(\thetab_0)_{\{1,2\},i}) = \left(\frac{n}{3} \right)^2
\begin{cases}
0.55240 \tau -1.42429 & \mbox{if $i = 1$,}
\\
0.44330 \tau -1.53160 & \mbox{if $i = 2$.}
\end{cases}
\end{align*}
Therefore $\Delta_p(\thetabt_0) = -1$ for each $n \ge 3$ and the result follows from Theorem \ref{thm:criterion2}.

In a similar way we prove the second statement for $n \ge 5$ by setting
\begin{align} \label{eq:nge5}
\thetabt_0 = (3.80063,1.47760,6.77075,5,78071,4.79067,3.80063,\dots,3.80063) \in \R^n.
\end{align}
Again note that $\thetabt_0$ reduces to $\thetab_0$ in \eqref{eq:5} when $n = 5$. By Lemma \ref{lem:st} we know that
\begin{align*}
\det(\Rbt_\tau(\thetabt_0)_{\{1,2\}}) = \left(\frac{n}{5} \right)^4 \det(\Rb_\tau(\thetab_0)_{\{1,2\}}) = \left(\frac{n}{5} \right)^4\left( -0.00024 \tau + 0.00191 \right)
\end{align*}
and
\begin{align*}
\det(\Rbt_\tau(\thetabt_0)_{\{1,2\},i}) = \left(\frac{n}{5} \right)^2 \det(\Rb_\tau(\thetab_0)_{\{1,2\},i}) = \left(\frac{n}{5} \right)^2
\begin{cases}
0.02296 \tau -0.18050 & \mbox{if $i = 1$,}
\\
0.01844 \tau -0.15351 & \mbox{if $i = 2$.}
\end{cases}
\end{align*}
Therefore $\Delta_p(\thetabt_0) = -1$ for each $n \ge 5$ and $\cos \etabt_0 = (-0.68327,0.54866,0.54866,0.54866,0.54866,1,\dots,1)$. Therefore the result follows from Theorems \ref{thm:criterion1} and \ref{thm:criterion2}.
\end{proof}

For $3 \le n \le 4$ we used Mathematica to minimize $\delta(\thetab,\gammab)$ subject to the same conditions on the characteristic polynomial as those used to find $\thetab_0$ and $\gammab$ in \eqref{eq:5}. However, we were unable to obtain a negative value for $\delta(\thetab,\gammab)$. This gives numerical evidence that Theorem \ref{thm:existence} is best possible although we admit that we do not have a proof of this.

We end this section by briefly presenting Theorem \ref{thm:existence} as a constructive algorithm. For the first part of Theorem \ref{thm:existence} choose any $n \ge 3$ and complete the following steps:
\begin{enumerate}
\item Set $\thetab_0$ as in \eqref{eq:nge3},
\item Find a vector $\xb \in \R_{>0}^n$ such that $\hb(\cos \etab_0) \xb = 0$ and $\xb^\top \Tb(\thetab_0) \xb < 0$,
\item Set $\gammab = \hb(\xb)^{-1}$, $\sigma_0 = 1$, and $\omegab = \gb(\thetab_0,\gammab)$.
\end{enumerate}
The existence of the vector in (2) is guaranteed by the proofs of Theorems \ref{thm:criterion2} and \ref{thm:existence}. Then our choice of $\gammab$ results in the equations $\det_\red(\Jb(\thetab_0,\gammab)) = \hb(\cos \etab_0) \xb = 0$ and $\delta(\thetab_0,\gammab) = \xb^\top \Tb(\thetab_0) \xb < 0$. Therefore the Kuramoto model with parameters $\omegab$ and $\gammab$ has the desired bifurcation at $(\thetab,\sigma) = (\thetab_0,\sigma_0)$ by Theorem \ref{thm:criterion1}.

For the second part of Theorem \ref{thm:existence} choose any $n \ge 5$ and set $\thetab_0$ as in \eqref{eq:nge5}.



\section{Other Directions and Future Work} \label{sec:future}

As mentioned in the introduction our bifurcation in Figure \ref{fig:1a} is locally a subcritical saddle-node bifurcation and globally an $S$-curve. We conjecture that this is generically the case. (Note that we discussed the genericity of the subcritical saddle-node bifurcation heuristically after the statement of Theorem \ref{thm:criterion1}.

One may wonder if we can numerically generate higher order $S$-curves. This can occur if we have a hysteresis point which we then perturb. In our case a hysteresis point is a pair $(\thetab_0,\bg)$ for which the curve $\thetab(s)$ is tangential to the level set $\det_\red(\Jb(\thetab_0,\bg)) = 0$. In other words, the curve has a higher order of contact with the level set. If we can make this order of contact high enough we should be able to construct a higher order $S$-curve. To do this we set
\begin{align*}
\det_\red(\Jb(\thetab(s),\bg) = \sum_{k=0}^\infty a_k(\thetab_0,\bg) s^k
\end{align*}
and deduce that
\begin{align*}
a_0(\thetab,\bg) = \det_\red(\Jb(\thetab,\bg)) \quad \text{and} \quad  a_{k+1}(\thetab,\bg) = \langle \vb(\thetab,\bg), \nabla_\thetab a_k(\thetab,\bg) \rangle
\end{align*}
for $k \ge 0$. We say that the curve $\thetab(s)$ has order of contact $d$ with the level set $\det_\red(\Jb(\thetab,\bg)) = 0$ at $\thetab_0$ if $a_d(\thetab_0,\bg) \ne 0$ and $a_k(\thetab_0,\bg) = 0$ for $0 \le k < d$.

We executed this strategy and set $a_0(\thetab_0,\bg) = a_1(\thetab_0,\bg) = a_2(\thetab_0,\bg) = 0$ and numerically found the values
\begin{align*}
\thetab_0 = (5.00250, 3.78647, 2.09311, 4.35320, 5.58644, 0.99502),
\\
\bg = (0.87815, 1.22758, 2.45939, 0.51472, 6.30989, 2.91474).
\end{align*}
We then perturbed the value of $\thetab_0$ by
\begin{align*}
\epsilonb_1 = (0.04000, 0.02000, 0.01000, 0.01000, 0.01000, -0.02968),
\\
\epsilonb_2 = (0.02000, -0.02000, 0.04000, -0.03000, -0.01000, -0.16604),
\end{align*}
which leads to the values
\begin{align*}
\det_\red(\Jb(\thetab_0,\gammab)) &= 0 \quad \text{and} \quad \delta(\thetab_0,\gammab) = 0,
\\
\det_\red(\Jb(\thetab_0 + \epsilonb_1,\gammab)) &= 0 \quad \text{and} \quad \delta(\thetab_0 + \epsilonb_1,\gammab) = -6.18641,
\\
\det_\red(\Jb(\thetab_0 + \epsilonb_2,\gammab)) &= 0 \quad \text{and} \quad \delta(\thetab_0 + \epsilonb_2,\gammab) = 12.7292,
\end{align*}
and the three figures in Figure \ref{fig:4}.

\begin{figure}[H]
\captionsetup{font=scriptsize}
\centering
\begin{subfigure}{5cm}
  \centering
  \includegraphics[width=4.5cm]{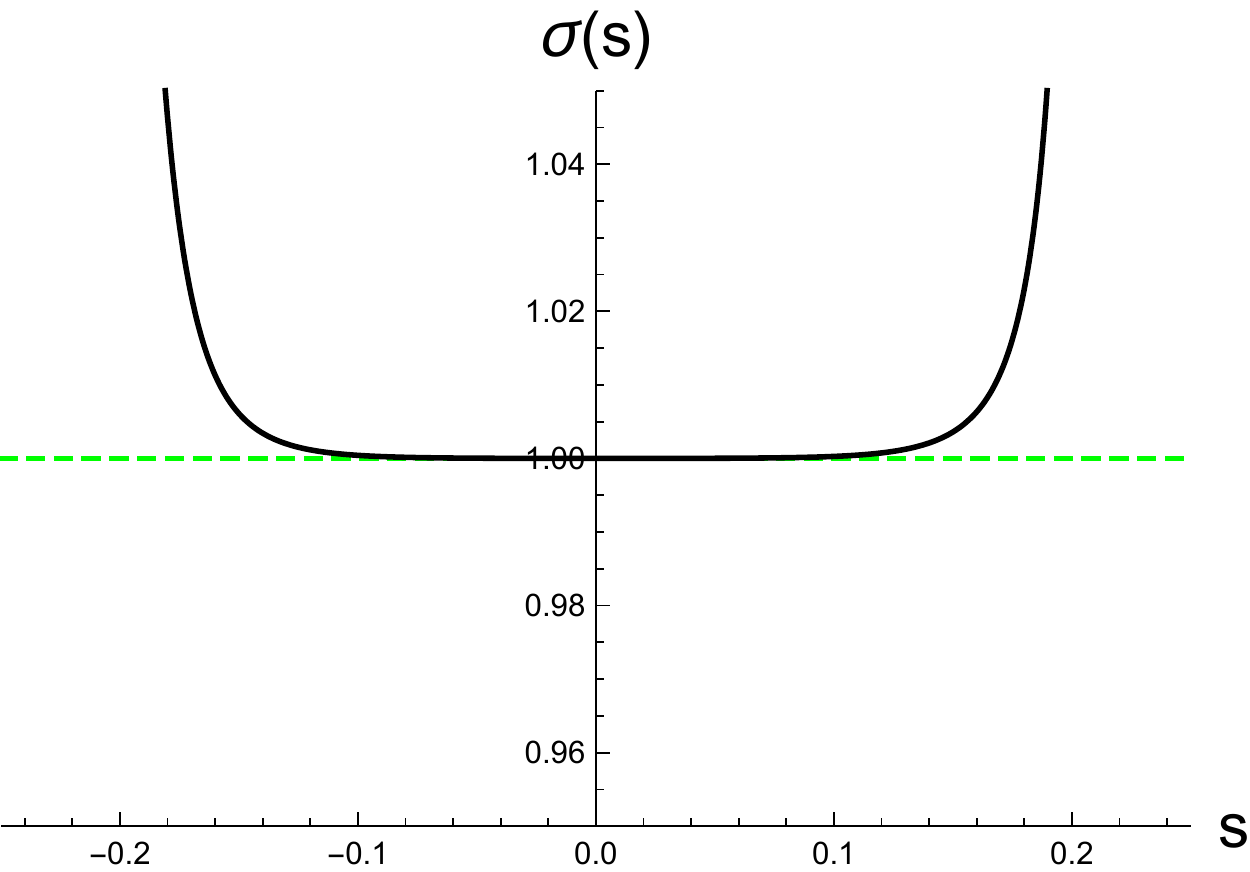}
  \caption{} \label{fig:4a}
\end{subfigure}
\begin{subfigure}{5cm}
  \centering
  \includegraphics[width=4.5cm]{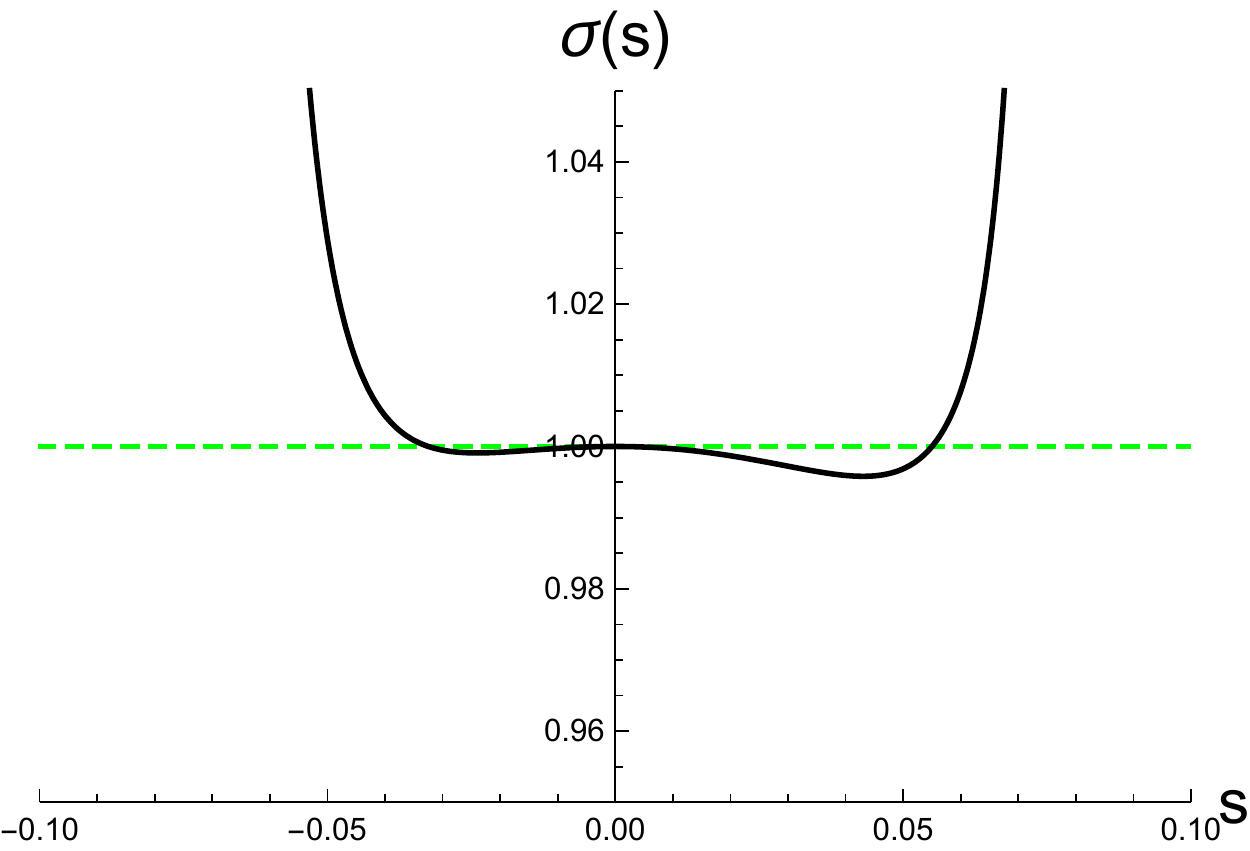}
  \caption{} \label{fig:4b}
\end{subfigure}
\begin{subfigure}{5cm}
  \centering
  \includegraphics[width=4.5cm]{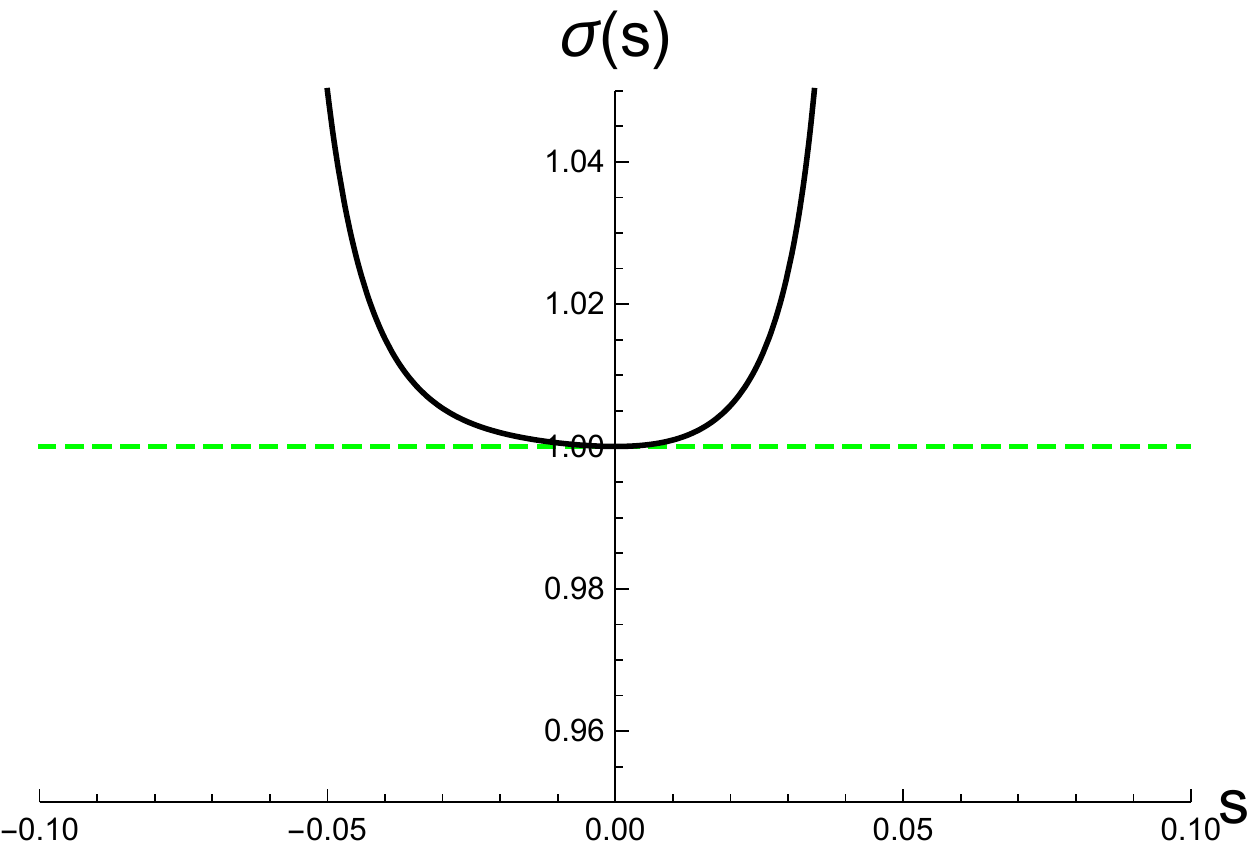}
  \caption{} \label{fig:4c}
\end{subfigure}
\caption{(A) A plot of $\sigma(s)$ for $\thetabt_0 = \thetab_0$. Note that $\sigma(s)$ is extra flat at $s = 0$. (B) A plot of $\sigma(s)$ for $\thetabt_0 = \thetab_0 + \epsilonb_1$. Note that $\sigma(s)$ develops a local maximum at $s = 0$. (C) $\sigma(s)$ for $\thetabt_0 = \thetab_0 + \epsilonb_2$. Note that $\sigma(s)$ develops a local minimum at $s = 0$. Again, the dashed green lines are there to help visualize the behavior of $\sigma(s)$ near $s = 0$. Note that all three plots have characteristics similar to the plots of certain quartic polynomials.} \label{fig:4}
\end{figure}

We note that the equations $a_k(\thetab_0,\bg) = 0$ become complex at an alarming rate as $n$ or $k$ increases, and it is for this reason only that we do not push this method farther to search for a higher order $S$-curve.

Next, now that we know the sets \eqref{eq:set1} and \eqref{eq:set2} are non-empty, it is natural to ask if they have any discernible structure or if we can estimate their size. Although we do not provide results in this direction, we demonstrate an interesting relationship in Figure \ref{fig:5}. It would be interesting to determine if the sets \eqref{eq:set1} and \eqref{eq:set2} are closely related to the order parameter in general. Finally, another class of sets that would be interesting to study are
\begin{align*}
\{ \gammab \in \R_{>0}^n : \det_\red(\Jb(\thetab,\gammab)) = 0 \text{ and } \delta(\thetab,\gammab) < 0 \}
\end{align*}
for fixed $\thetab \in \R^n$. In particular, estimating its measure would be an interesting problem.

\begin{figure}[H]
\captionsetup{font=scriptsize}
\centering
\begin{subfigure}{7cm}
  \centering
  \includegraphics[width=7cm]{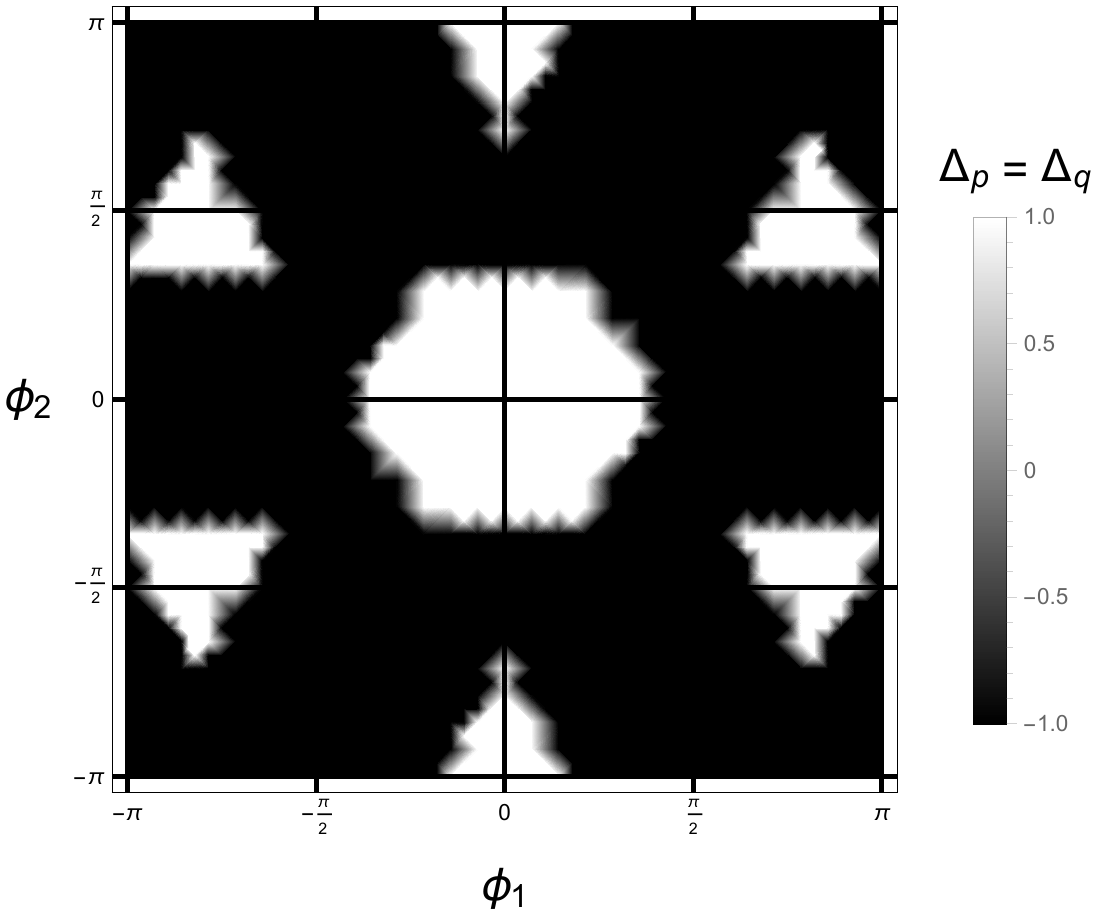}
  \caption{} \label{fig:5a}
\end{subfigure}
\begin{subfigure}{7cm}
  \centering
  \includegraphics[width=7cm]{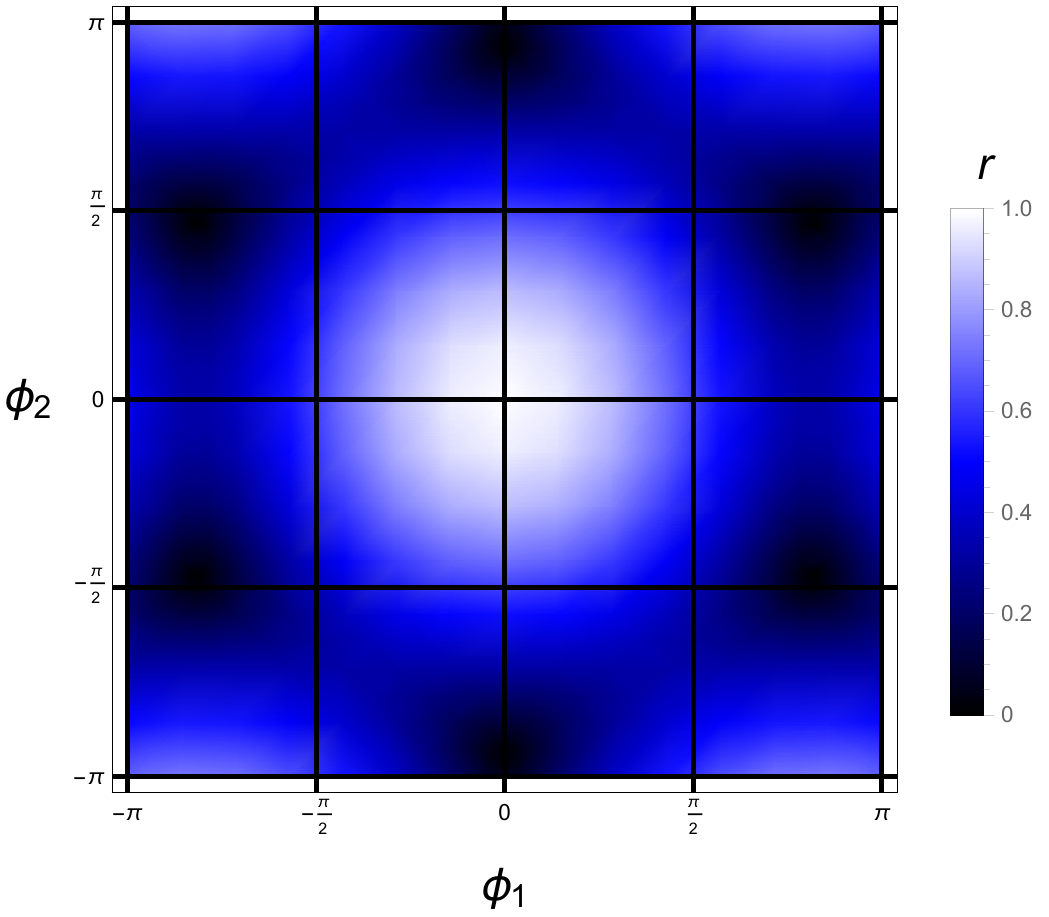}
  \caption{} \label{fig:5b}
\end{subfigure}
\caption{(A) A density plot of $\Delta_p(\thetab) = \Delta_q(\thetab)$. (Note that equality holds almost everywhere by our discussion following Theorem \ref{thm:criterion2}.) (B) A density plot of the order parameter $r$. In both plots we again projected $\R^3$ onto the mean-zero plane $\1^\perp$ via the variables $\phib = \Pb \thetab$. Note that the white regions in (A) correspond to the white and black regions in (B) and that the black region in (A) corresponds to the blue region in (B). Therefore the region $\{ \thetab \in \R^3 : \Delta_p(\thetab) = \Delta_q(\thetab) = -1\}$ is roughly the set of $\thetab$ for which $r$ isn't too large or too small.} \label{fig:5}
\end{figure}



\section{Conclusion}

We derived a criterion for the Kuramoto model on a ring network with positive coupling to exhibit a bifurcation in which two branches of phase-locked solutions collide as $\sigma$ increases. Furthermore, we derived a sufficient condition for one of these branches to consist of stable phase-locked solutions. We then applied our criteria to show that for any $n \ge 3$ we can choose $\omegab$ and $\bg$ so that this bifurcation occurs. (We require that $n \ge 5$ to guarantee the existence of $\omegab$ and $\bg$ for which one of the branches consists of stable phase-locked solutions.) We conjecture that generically our bifurcation is locally a subcritical bifurcation and globally an $S$-curve.



\section{Acknowledgements}

The author gratefully acknowledges support under NSF grant DMS1615418.



\section{Appendix}

We start by making a definition that significantly shortens the proofs of Lemma \ref{lem:quadratic} and Lemma \ref{lem:st}.

\begin{definition} \label{def:d}
Let $\ub \in \R^n$ and $f : \R \rightarrow \R$ be given. Define the diagonal matrix $\Db_\ub \in \R^{n \times n}$ and the vector $f(\ub) \in \R^n$ by
\begin{align*}
(\D_\ub)_{ii} = u_i \quad \text{and} \quad f(\ub)_i = f(u_i).
\end{align*}
\end{definition}

Now we give proofs of all relevant theorems and lemmas.

\begin{proof}[Proof of Theorem \ref{thm:criterion1}]
Note that $\nb(\theta_0,\gammab)$ is a normal vector for the level set $\{ \thetab \in \R^n : \det_\red(\Jb(\thetab,\gammab)) = 0\}$ at $\thetab_0$ pointing in the direction of most rapid increase for $\det_\red(\Jb(\thetab,\gammab))$. From \eqref{eq:hypothesis} we conclude that
\begin{align} \label{eq:detsigma}
\det_\red(\Jb(\thetab(s),\gammab))
\begin{cases}
> 0 & \mbox{if $s < 0$ is small,}
\\
= 0 & \mbox{if $s = 0$,}
\\
< 0 & \mbox{if $s > 0$ is small,}
\end{cases}
\quad \text{and} \quad
\frac{d \sigma}{ds}
\begin{cases}
> 0 & \mbox{if $s < 0$ is small,}
\\
= 0 & \mbox{if $s = 0$,}
\\
< 0 & \mbox{if $s > 0$ is small,}
\end{cases}
\end{align}
where $\thetab(s)$ and $\sigma(s)$ are given in Lemma \ref{lem:flow}. In other words, $\sigma(0) = \sigma_0$ is a local maximum.

Next, by Theorem 2.9 of \cite{doi:10.1137/15M1034258} we have that
\begin{align*}
\text{$\#$positive eigenvalues of $\Jb(\thetab,\gammab)$} = \# \{ i : \gamma_i \cos \eta_i > 0 \} -
\begin{cases}
1 & \mbox{ if $\sum_{i=1}^n \frac{1}{\gamma_i \cos \eta_i} > 0$,}
\\
0 & \mbox{ if $\sum_{i=1}^n \frac{1}{\gamma_i \cos \eta_i} < 0$.}
\end{cases}
\end{align*}
For all $\thetab$ in a sufficiently small neighborhood of $\thetab_0$ we have that $\# \{ i : \gamma_i \cos \eta_i > 0 \} = n_+(\cos \etab_0)$ and that
\begin{align*}
\sum_{i=1}^n \frac{1}{\gamma_i \cos \eta_i} = \frac{1}{n} \left( \prod_{i=1}^n \frac{1}{\gamma_i \cos \eta_i} \right) \det_\red(\Jb(\thetab,\gammab))
\end{align*}
has the same sign as $(-1)^{n-n_+(\cos \etab_0)} \det_\red(\Jb(\thetab,\gammab))$.
\end{proof}

\begin{proof}[Proof of Lemma \ref{lem:flow}]
We will only prove \eqref{eq:fixed}. To do this set $\rb(s) := \omegab - \sigma(s) \gb(\thetab(s),\gammab)$. Then
\begin{align*}
\Pb \frac{d \rb}{ds} &= -\sigma'(s) \Pb \gb(\thetab(s),\gammab) - \sigma(s) \Pb \Jb(\thetab(s),\gammab) \vb(\thetab(s),\gammab)
\\
&= -\sigma'(s) \Pb \gb(\thetab(s),\gammab) + \sigma(s) \Pb \Jb(\thetab(s),\gammab) \Pb^\top \adj(\Pb \Jb(\thetab(s),\gammab) \Pb^\top) \Pb \gb(\thetab(s),\gammab)
\\
&= [-\sigma'(s) + \sigma(s) \det_\red(\Jb(\thetab(s),\gammab)] \Pb \gb(\thetab(s),\gammab) = \0.
\end{align*}
But $\rb$ is orthogonal to $\1$ which spans the nullspace of $\Pb$. Therefore we conclude that $\rb(s) = \rb(0) = \0$.
\end{proof}

\begin{proof}[Proof of Lemma \ref{lem:quadratic}]

We first note that \eqref{eq:quadratic} follows from the identity
\begin{align} \label{eq:btv}
\Bb^\top \vb(\thetab,\gammab) = \det_\red(\Jb(\thetab,\gammab)) \left( \left( \frac{\1^\top \Db_{\sin \etab} \Db_{\cos \etab}^{-1} \1}{\1^\top \Db_\gammab^{-1} \Db_{\cos \etab}^{-1} \1} \right) \Db_\gammab^{-1} - \Db_{\sin \etab} \right) \Db_{\cos \etab}^{-1} \1 = \Cb(\thetab) \hb(\gammab)
\end{align}
since the restriction $\Bb : \1^\perp \rightarrow \1^\perp$ is invertible and $\nb(\thetab,\gammab)^\top = n \hb(\gammab)^\top \nabla_\thetab \hb(\cos \etab) = \hb(\gammab)^\top \Ab(\thetab)$. For reference later we note the identities
\begin{align} \label{eq:gJ}
\gb(\thetab,\gammab) = \Bb \Db_\gammab \sin \etab \quad \text{and} \quad \Jb(\thetab,\gammab) = \Bb \Db_\gammab \Db_{\cos \etab} \Bb^\top.
\end{align}
We start from the equation $(\Pb \Jb(\thetab,\gammab) \Pb^\top) \Pb \vb(\thetab,\gammab) = -\det_\red(\Jb(\thetab,\gammab)) \Pb \gb(\thetab,\gammab)$ which simplifies to $\Jb(\thetab,\gammab) \vb(\thetab,\gammab) \\ = -\det_\red(\Jb(\thetab,\gammab)) \gb(\thetab,\gammab)$ since $\Pb \Pb^\top = \Ib$ and $\Pb^\top \Pb = \Ib - \frac{1}{n} \1 \1^\top$. By \eqref{eq:gJ} we have that
\begin{align*}
\Bb \Db_\gammab \Db_{\cos \etab} \Bb^\top \vb(\thetab,\gammab) = -\det_\red(\Jb(\thetab,\gammab)) \Bb \Db_\gammab \Db_{\sin \etab} \1
\end{align*}
hence there exists a scalar $x$ such that
\begin{align*}
\Bb^\top \vb(\thetab,\gammab) = -\det_\red(\Jb(\thetab,\gammab)) \Db_{\sin \etab} \Db_{\cos \etab}^{-1} \1 + x \Db_\gammab^{-1} \Db_{\cos \etab}^{-1} \1
\end{align*}
since the null space of $\Bb$ is spanned by $\1$. In fact we find that
\begin{align*}
x = \det_\red(\Jb(\thetab,\gammab)) \frac{\1^\top \Db_{\sin \etab} \Db_{\cos \etab}^{-1} \1}{\1^\top \Db_\gammab^{-1} \Db_{\cos \etab}^{-1} \1}
\end{align*}
by noting that $\1^\top \B^\top = \0$. This verifies the first half of \eqref{eq:btv}. Next we note that
\begin{align*}
\det_{\red}(\Jb(\thetab,\gammab)) \Db_{\sin \etab} \Db_{\cos \etab}^{-1} \1 = n \Db_{\sin \etab} \Db_{\cos \etab}^{-1} \1 \hb(\cos \etab)^\top \hb(\gammab)
\end{align*}
and that
\begin{align*}
\det_\red(\Jb(\thetab,\gammab)) \left( \frac{\1^\top \Db_{\sin \etab} \Db_{\cos \etab}^{-1} \1}{\1^\top \Db_\gammab^{-1} \Db_{\cos \etab}^{-1} \1} \right) \Db_\gammab^{-1} \Db_{\cos \etab}^{-1} \1 = n\left( \sum_{k=1}^n \frac{\sin \eta_k}{\cos \eta_k} \right) \Db_{\hb(\cos \etab)} \hb(\gammab)
\end{align*}
by Theorem \ref{thm:detred} where
\begin{align*}
n (\Db_{\sin \etab} \Db_{\cos \etab}^{-1} \1 \hb(\cos \etab)^\top)_{ij} = n \frac{\sin \eta_i}{\cos \eta_i} \hb(\cos \etab)_j
\end{align*}
and
\begin{align*}
n \left( \sum_{k=1}^n \frac{\sin \eta_k}{\cos \eta_k} \right) (\Db_{\hb(\cos \etab)})_{ij} =
\begin{cases}
n \left( \sum_{k=1}^n \frac{\sin \eta_k}{\cos \eta_k} \right) \hb(\cos \etab)_j & \mbox{if $i = j$,}
\\
0 & \mbox{if $i \ne j$,}
\end{cases}
\end{align*}
which verifies the second half of \eqref{eq:btv}.
\end{proof}

\begin{proof}[Proof of Lemma \ref{lem:min}]
Following the arguments in \cite{KREPS1984105} we easily find that
\begin{align*}
\min_{\xb \in \R_{\ge 0}^n} \frac{\xb^\top \Rb \xb}{\| \xb \|_1^2} \ge t \text{ if and only if } \min_{i \in \Ic} \det(\Rb_{\Ic,i}) \le 0 \text{ or } \frac{\det(\Rb_\Ic)}{\sum_{i \in \Ic} \det(\Rb_{\Ic,i})} \ge t
\end{align*}
for every $\Ic \subseteq \{1, \dots, n\}$. This is of course equivalent to
\begin{align*}
\min_{\xb \in \R_{\ge 0}^n} \frac{\xb^\top \Rb \xb}{\| \x \|_1^2} < t \text{ if and only if } \min_{i \in \Ic} \det(\Rb_{\Ic,i}) > 0 \text{ and } \frac{\det(\Rb_\Ic)}{\sum_{i \in \Ic} \det(\Rb_{\Ic,i})} < t
\end{align*}
for some $\Ic \subseteq \{1, \dots, n\}$. Taking the infimum over all such $t$ gives the result.
\end{proof}

\begin{proof}[Proof of Theorem \ref{thm:criterion2}]
Suppose that $\Delta_p(\thetab_0) = -1$. Then there exists an $\Ic \in \Ic(\thetab_0)$ such that
\begin{align*}
\lim_{\tau \rightarrow \infty} \frac{\det(\Rb_\tau(\thetab_0)_\Ic)}{\sum_{i \in \Ic} \det(\Rb_\tau(\thetab_0)_{\Ic,i})} = \epsilon < 0.
\end{align*}
By Lemma \ref{lem:min} there exists vectors $\xb_\tau \in \R_{\ge 0}^n$ such that $\| \xb_\tau \|_1 = 1$ and
\begin{align*}
\xb_\tau^\top \Rb_\tau(\thetab_0) \xb_\tau = (\hb(\cos \etab_0)^\top \xb_\tau)^2 \tau + \xb_\tau^\top \Tb(\thetab_0) \xb_\tau < \frac{\epsilon}{2}
\end{align*}
for all sufficiently large $\tau$. But the unit ball $\{ \xb \in \R^n : \| \xb \|_1 = 1\}$ is compact so there exists an infinite sequence $\tau_1 < \tau_2 < \dots$ with $\tau_n \rightarrow \infty$ such that $\lim_{n \rightarrow \infty} \xb_{\tau_n} = \xb$ exists. Now the second term $\xb_\tau^\top \Tb(\thetab_0) \xb_\tau$ is bounded since $\| \xb_\tau \|_1 = 1$. Therefore $\hb(\cos \etab_0)^\top \xb = \lim_{n \rightarrow \infty} \hb(\cos \etab_0)^\top \xb_{\tau_n} = 0$. Also, the first term $(\hb(\cos \etab_0)^\top \xb_\tau)^2$ is non-negative so that $\xb^\top \Tb(\thetab_0) \xb = \lim_{n \rightarrow \infty} \xb_{\tau_n}^\top \Tb(\thetab_0) \xb_{\tau_n} \le \epsilon/2$.

Clearly $\xb \in \R_{\ge 0}^n$ and $\| \xb \|_1 = 1$. Therefore $\hb(\cos \etab_0)^\top \xb = 0$ implies that $\hb(\cos \etab_0)$ has at least one positive component and at least one negative component. Therefore we can slightly modify $\xb$ to obtain a new vector $\xbt \in \R_{>0}^n$ such that $\hb(\cos \etab_0)^\top \xbt = 0$ and $\xbt^\top \Tb(\thetab_0) \xbt < 0$. By Lemma \ref{lem:quadratic} we can choose $\gammab \in \R_{>0}^n$ such that $\xbt = \hb(\gammab)$. Then $\det_\red(\Jb(\thetab_0,\gammab)) = \hb(\cos \etab_0)^\top \xbt = 0$ and $\delta(\thetab_0,\gammab) = \xbt^\top \Tb(\thetab_0) \xbt < 0$ so that \eqref{eq:hypothesis} is satisfied.

Now suppose that there exists a $\gammab$ satisfying \eqref{eq:hypothesis}. Then the vector $\xb = \hb(\gammab) \in \R_{>0}^n$ satisfies
\begin{align*}
\xb^\top \Rb_\tau(\thetab_0) \xb = \det_\red(\Jb(\thetab_0,\gammab))^2 \tau + \delta(\thetab_0,\gammab) < 0
\end{align*}
for all $\tau$. By Lemma \ref{lem:min} this implies that for each $\tau$ there exists an $\Ic_\tau \in \Ic(\thetab_0)$ such that $\det(\Rb_\tau(\thetab_0)_{\Ic_\tau}) < 0$. But $\Ic(\thetab_0)$ is finite so there exits an $\Ic \in \Ic(\thetab_0)$ such that $\det(\Rb_{\tau_n}(\thetab_0)_\Ic) < 0$ for a sequence $\tau_1 < \tau_2 < \dots $ with $\tau_n \rightarrow \infty$. However, $\det(\Rb_{\tau_n}(\thetab_0)_\Ic)$ is a polynomial in $\tau$ so that in fact $\det(\Rb_{\tau_n}(\thetab_0)_\Ic) < 0$ for all sufficiently large $\tau$. Therefore we obtain that $\Delta_q(\thetab_0) = -1$.
\end{proof}

\begin{proof}[Proof of Lemma \ref{lem:st}]
We first note that $\etabt = (\eta_1,\dots,\eta_n,0,\dots,0) \in \R^{\nt}$ where $\etab = (\eta_1,\dots,\eta_n) \in \R^n$. Therefore
\begin{align*}
\hb(\cos \etabt)_i =
\begin{cases}
\hb(\cos \etab)_i & \mbox{if $1 \le i \le n$,}
\\
\prod_{j=1}^n \cos \eta_j & \mbox{if $n < i \le \nt$,}
\end{cases}
\end{align*}
demonstrating the first equality. Next we note that
\begin{align*}
\Cbt(\thetabt) =
\begin{pmatrix}
\frac{\nt}{n} \Cb(\thetab) & \hdots
\\
\0 & \ddots
\end{pmatrix}
\end{align*}
from Definition \ref{def:abc}. Also we have that
\begin{align*}
\Hbt(\cos \etabt) =
\begin{pmatrix}
\Hb(\cos \etab) & \hdots
\\
\vdots & \ddots
\end{pmatrix}
\quad \text{and} \quad \Abt(\thetabt) = \nt \Hbt(\cos \etabt) \Dbt_{\sin \etabt} \Bbt^\top \quad \text{and} \quad \Ab(\thetab) = n \Hb(\cos \etab) \Db_{\sin \etab} \Bb^\top
\end{align*}
where $\Hbt$ and $\Hb$ are the Jacobians of $\hbt$ and $\hb$ respectively. Therefore
\begin{align*}
\Tbt(\thetabt) &= \nt \Hbt(\cos \etabt) \Dbt_{\sin \etabt} \Bbt^\top (\Bbt^\top)^{-1} \Cbt(\thetabt) =
\begin{pmatrix}
\frac{\nt}{n} \Hb(\cos \etab) & \hdots
\\
\vdots & \ddots
\end{pmatrix}
\begin{pmatrix}
\Db_{\sin \etab} & \0
\\
\0 & \0
\end{pmatrix}
\begin{pmatrix}
\frac{\nt}{n} \Cb(\thetab) & \hdots
\\
\0 & \ddots
\end{pmatrix}
\\
&=
\begin{pmatrix}
(\frac{\nt}{n})^2\Hb(\cos \etab) \Db_{\sin \etab} \Cb(\thetab) & \hdots
\\
\vdots & \ddots
\end{pmatrix}
=
\begin{pmatrix}
(\frac{\nt}{n})^2\Hb(\cos \etab) \Db_{\sin \etab} \Bb (\Bb^\top)^{-1} \Cb(\thetab) & \hdots
\\
\vdots & \ddots
\end{pmatrix}
=
\begin{pmatrix}
(\frac{\nt}{n})^2\Tb(\thetab) & \hdots
\\
\vdots & \ddots
\end{pmatrix}
\end{align*}
completing the proof.
\end{proof}



\bibliography{StabilityKuramotoFrequencies}

\begin{thebibliography}{10}

\bibitem{Acebron.etal.05}
J.A. Acebr{\'o}n, L.L. Bonilla, C.J.P. Vicente, F.~Ritort, and R.~Spigler.
\newblock The {K}uramoto model: A simple paradigm for synchronization
  phenomena.
\newblock {\em Reviews of modern physics}, 77(1):137, 2005.

\bibitem{doi:10.1137/15M1034258}
Jared~C. Bronski, Lee. DeVille, and Timothy. Ferguson.
\newblock Graph homology and stability of coupled oscillator networks.
\newblock {\em SIAM Journal on Applied Mathematics}, 76(3):1126--1151, 2016.

\bibitem{Buck1988SynchronousRhythmicFlashing}
J.~Buck.
\newblock {Synchronous Rhythmic Flashing of Fireflies. II.}
\newblock {\em The Quarterly Review of Biology}, 63(3):265--289, 1988.

\bibitem{Chopra.Spong.2009}
N.~Chopra and M.~W. Spong.
\newblock On exponential synchronization of {K}uramoto oscillators.
\newblock {\em IEEE Trans. Automat. Control}, 54(2):353--357, 2009.

\bibitem{Dorfler.Bullo.2011}
F.~D{\"o}rfler and F.~Bullo.
\newblock On the critical coupling for {K}uramoto oscillators.
\newblock {\em SIAM J. Appl. Dyn. Syst.}, 10(3):1070--1099, 2011.

\bibitem{Dorfler.Chertkov.Bullo.2013}
F.~D{\"o}rfler, M.~Chertkov, and F.~Bullo.
\newblock Synchronization in complex oscillator networks and smart grids.
\newblock {\em Proc. Natl. Acad. Sci. USA}, 110(6):2005--2010, 2013.

\bibitem{Filatrella2008}
G.~Filatrella, A.~H. Nielsen, and N.~F. Pedersen.
\newblock Analysis of a power grid using a kuramoto-like model.
\newblock {\em The European Physical Journal B}, 61(4):485--491, Feb 2008.

\bibitem{Brady.2018}
Brady Gilg.
\newblock {\em Critical Coupling and Synchronized Clusters in Arbitrary
  Networks of Kuramoto Oscillators}.
\newblock PhD thesis, 2018.

\bibitem{golubitsky1985singularities}
M.~Golubitsky, D.G. Schaeffer, and I.~Stewart.
\newblock {\em Singularities and Groups in Bifurcation Theory}.
\newblock Number v. 1 in Applied Mathematical Sciences. Springer, 1985.

\bibitem{PhysRevA.52.4089}
S.~Yu. Kourtchatov, V.~V. Likhanskii, A.~P. Napartovich, F.~T. Arecchi, and
  A.~Lapucci.
\newblock Theory of phase locking of globally coupled laser arrays.
\newblock {\em Phys. Rev. A}, 52:4089--4094, Nov 1995.

\bibitem{KREPS1984105}
V.L. Kreps.
\newblock On {Q}uadratic {F}orms {N}on-{N}egative {O}ver an {O}ctant.
\newblock {\em USSR Computational Mathematics and Mathematical Physics},
  24(2):105 -- 109, 1984.

\bibitem{MR762432}
Y.~Kuramoto.
\newblock {\em Chemical oscillations, waves, and turbulence}, volume~19 of {\em
  Springer Series in Synergetics}.
\newblock Springer-Verlag, Berlin, 1984.

\bibitem{2015Chaos..25e3103M}
Dhagash {Mehta}, Noah~S. {Daleo}, Florian {D{\"o}rfler}, and Jonathan~D.
  {Hauenstein}.
\newblock {Algebraic geometrization of the Kuramoto model: Equilibria and
  stability analysis}.
\newblock {\em Chaos}, 25(5):053103, May 2015.

\bibitem{peskin75}
Charles~S. Peskin.
\newblock {\em Mathematical aspects of heart physiology}.
\newblock Courant Institute of Mathematical Sciences, New York University, New
  York, NY, USA, 1975.

\bibitem{MR3097651}
Richard~P. Stanley.
\newblock {\em Algebraic combinatorics}.
\newblock Undergraduate Texts in Mathematics. Springer, New York, 2013.
\newblock Walks, trees, tableaux, and more.

\bibitem{Sync.book}
S.~Strogatz.
\newblock {\em Sync: The Emerging Science of Spontaneous Order}.
\newblock Hyperion, 2003.

\bibitem{MR1783382}
Steven~H. Strogatz.
\newblock From {K}uramoto to {C}rawford: exploring the onset of synchronization
  in populations of coupled oscillators.
\newblock {\em Phys. D}, 143(1-4):1--20, 2000.
\newblock Bifurcations, patterns and symmetry.

\bibitem{Terada_2017}
Yu~Terada, Keigo Ito, Toshio Aoyagi, and Yoshiyuki~Y Yamaguchi.
\newblock Nonstandard transitions in the kuramoto model: a role of asymmetry in
  natural frequency distributions.
\newblock {\em Journal of Statistical Mechanics: Theory and Experiment},
  2017(1):013403, jan 2017.

\bibitem{Verwoerd.Mason.2008}
M.~Verwoerd and O.~Mason.
\newblock Global phase-locking in finite populations of phase-coupled
  oscillators.
\newblock {\em SIAM J. Appl. Dyn. Syst.}, 7(1):134--160, 2008.

\bibitem{Verwoerd.Mason.2009}
M.~Verwoerd and O.~Mason.
\newblock On computing the critical coupling coefficient for the {K}uramoto
  model on a complete bipartite graph.
\newblock {\em SIAM J. Appl. Dyn. Syst.}, 8(1):417--453, 2009.

\end{thebibliography}
\bibliographystyle{plain}

\end{document}